\newenvironment{mcases}[1][l]
 {\let\@ifnextchar\new@ifnextchar
  \left\lbrace
  \array{@{}l@{\quad}#1@{}}}
 {\endarray\right.}
\newtheorem{theorem}{Theorem}[section]
\newtheorem{definition}{Definition}[section]
\newtheorem{corollary}[theorem]{Corollary}
\theoremstyle{definition}
\def\th@remark{%
  \thm@headfont{\bfseries}%
  \normalfont 
  \thm@preskip\topsep \divide\thm@preskip\tw@
  \thm@postskip\thm@preskip
}
\theoremstyle{remark}
\newtheorem{lemma}[theorem]{Lemma}
\newcommand{\Hilbert}{\mathbb{H}}
\newcommand{\half}{(-\overline{\Delta})^\frac{1}{2}}
\newcommand{\fractional}{(-\overline{\Delta})^\alpha}
\newcommand{\grad}{\overline{\nabla}}
\newcommand{\lap}{\overline{\Delta}}
\newcommand{\dnu}{\partial_\nu \Psi}
\newcommand{\C}{\mathcal{C}}
\newcommand{\lambdan}{\sqrt{\lambda_n}}
\def\XXint#1#2#3{{\setbox0=\hbox{$#1{#2#3}{\int}$ }
\vcenter{\hbox{$#2#3$ }}\kern-.6\wd0}}
\newcommand{\Rthreeplus}{\mathbb{R}^3_+}
\title{The Inviscid 3D Quasi-Geostrophic System on Bounded Domains}
\author[Novack]{Matthew D. Novack}
\author[Vasseur]{Alexis F. Vasseur}
\address[Matthew D. Novack]{\newline Department of Mathematics, \newline The University of Texas at Austin, Austin, TX 78712, USA}
\email{mnovack@math.utexas.edu}
\address[Alexis F. Vasseur]{\newline Department of Mathematics, \newline The University of Texas at Austin, Austin, TX 78712, USA}
\email{vasseur@math.utexas.edu}
\date{\today}
\thanks{\textbf{Acknowledgment}: The second author was partially funded by the NSF during this work}
\subjclass[2010]{76B03,35Q35} \keywords{Quasi-geostrophic equation, global weak solution, bounded domains}
\begin{document}
\begin{abstract}
We present a formal derivation of the inviscid 3D quasi-geostrophic system (QG) from primitive equations on a bounded, cylindrical domain.  A key point in the derivation is the treatment of the lateral boundary and the resulting boundary conditions it imposes on solutions.  To our knowledge, these boundary conditions are new and differentiate our model from closely related models which have been the object of recent study. These boundary conditions are natural for a variational problem in a particular Hilbert space. We construct solutions and prove an elliptic regularity theorem corresponding to the variational problem, allowing us to show the existence of global weak solutions to (QG).
\end{abstract}
\maketitle \centerline{\date}
\section{Introduction}
In this paper we study the inviscid three-dimensional quasi-geostrophic system.  The QG model describes stratified flows on a large time scale for which the effect of the rotation of the Earth is significant. The model consists of two coupled transport equations as follows:
\[\begin{dcases}
        \left( \partial_t + \grad^\perp \Psi\cdot \grad \right) \left( \mathcal{L}(\Psi) + \beta_0 y \right)= a_L & \Omega\times[0,h]\times[0,T] \\
        \left( \partial_t + \grad^\perp \Psi \cdot \grad \right) (\partial_{\nu}\Psi) = a_\nu & \Omega\times\{0,h\}\times[0,T].\\
       \end{dcases}   \qquad (QG)
\]
Classically, the model is posed for $\Omega = \mathbb{R}^2$ or $\Omega=\mathbb{T}^2$.  We use the notation 
$$ \grad = ( \partial_x, \partial_y, 0 ), \qquad \grad^\perp = ( -\partial_y, \partial_x ,0) .$$
The functions $a_L$ and $a_\nu$ are forcing terms, and $\beta_0$ is a parameter coming from the usual $\beta$-plane approximation. The normal derivative of $\Psi$ on $\Omega\times\{0,h\}$ is denoted by $\dnu$. The operator $\mathcal{L}$ is defined by
$$ \mathcal{L}:=\partial_{xx}+\partial_{yy}+ \partial_z \left( \lambda \partial_z \right) $$
where $\lambda>0$ is a smooth function depending only on $z$ and is related to the density of the fluid. To ensure ellipticity of $\mathcal{L}$ we require $$\frac{1}{\Lambda} \leq \lambda(z) \leq \Lambda$$ for some $\Lambda \in (0,\infty)$. Throughout the remainder of the paper, the system shall be posed on a fixed cylindrical domain 
$$ \Omega\times[0,h] $$
where $\Omega\subset \mathbb{R}^2$ is a smooth, bounded set, and the height $h$ is fixed and finite. The values of $\mathcal{L}(\Psi)$ and $\dnu$ are advected by the fluid velocity field $\grad^\perp\Psi$. In order to reconstruct $\Psi$ at each time, it is necessary to supplement the system with a boundary condition on the lateral boundary $\partial\Omega \times [0,h]$. 

\subsection{Boundary Conditions}

The purpose of this paper is to formally derive an appropriate model from the primitive equations while assuming that the lateral boundary is \textit{impermeable}; that is, we assume only that the fluid velocity $\grad^\perp\Psi$ is tangent to $\partial\Omega\times[0,h]$. We then prove that weak solutions exist globally in time for the resulting system.  In fact, we show in \cref{bdrysection} that the impermeability produces two constraints on a possible solution.  First, we must have that
\begin{align}
\Psi(t,x,y,z)|_{\partial\Omega\times[0,h]} = c(t,z) \label{bdrycond1}
\end{align}
for some unknown function $c(t,z)$.  However, this is not enough to define a unique solution to an elliptic problem on $\Omega\times[0,h]$.  Crucially, the impermeability condition provides another natural constraint.  After defining $\nu_s$ to be the normal derivative to $\partial\Omega\times\{z\}$ and $\,d\omega$ the Hausdorff measure on $\partial\Omega$, the second constraint is that for all $z\in[0,h]$,
\begin{align}
\frac{\partial}{\partial t}\int_{\partial\Omega\times\{z\}}  \grad\Psi\cdot \nu_s \,d\omega = 0.  \label{bdrycond2}
\end{align}
In other words, building a weak solution to (QG) requires choosing a datum $j_0(z):[0,h]\rightarrow\mathbb{R}$ such that for all time,
$$ \int_{\partial\Omega\times\{z\}}  \grad\Psi(t)\cdot \nu_s \,d\omega = j_0(z). $$
These two conditions differentiate the model we derive from closely related models which have been studied recently by Constantin and Nguyen \cite{cn}, \cite{cn2} and Constantin and Ignatova \cite{ci}, \cite{ci2}.  While we shall explain this distinction in detail in \cref{thesection}, we first describe a rough sketch of our existence proof, and then state our main results.

In \cite{pv} and \cite{novackweak}, the authors used the observation that the transport equations for $\mathcal{L}(\Psi)$ and $\dnu$ in (QG) formally preserve the norms of the data for an elliptic problem with Neumann boundary condition. Therefore, a sequence of approximate solutions $\Psi_n$ for which $\mathcal{L}(\Psi_n)$ and $\partial_{\nu}\Psi_n$ converge weakly in (respectively) $L^\infty_t(L^2(\Omega\times[0,h]))$ and $L^\infty_t(L^2(\Omega\times\{0,h\}))$ will have strong convergence for $\nabla\Psi_n$ in $L_t^\infty(L^2(\Omega\times[0,h]))$.  A key property of the (QG) system is a reformulation of the system in terms of $\nabla\Psi$.  This reformulation, first utilized extensively by Puel and the second author in \cite{pv}, can be seen at the level of the primitive equations and draws an analogy to the parallel formulations of the 3D Euler equations in terms of the velocity and the vorticity.  Unlike Euler, however, the strong convergence then allows one to pass to the limit at the level of $\nabla\Psi_n$ to construct a weak solution.  In the setting of the bounded domain $\Omega\times[0,h]$, it is not immediate that imposing \eqref{bdrycond1} and \eqref{bdrycond2} on the lateral boundary will allow for compactness at the level of $\nabla\Psi_n$ in $L_t^\infty(L^2(\Omega\times[0,h]))$.  Indeed, it might seem possible that because \eqref{bdrycond2} only controls the average of $\grad\Psi \cdot \nu_s$ on the sides, $\grad\Psi\cdot\nu_s$ could oscillate quite badly on $\partial\Omega\times[0,h]$.  To address this, we must formulate \eqref{bdrycond2} weakly (see \cref{weakbdrycond2} in Section 3). However, we also prove an elliptic regularity theorem (\cref{superduperelliptic}) which implies that in fact $\grad\Psi \cdot\nu_s \in L^2(\partial\Omega\times[0,h])$ is well-defined \textit{pointwise}, and $\nabla\Psi_n$ converges strongly to $\nabla\Psi$ in $L_t^\infty(L^2(\Omega\times[0,h]))$. To the authors’ knowledge, this type of boundary condition and the corresponding elliptic regularity theorem are novel.

\subsection{Main Result}
Before stating the existence theorem, we must provide several definitions.  The first is a natural compatibility condition between the elliptic operator and boundary conditions.  
\begin{definition}\label{compatibility}
Any triple $(f,g,j)$ of functions with $f(x,y,z)\in L^2(\Omega\times[0,h])$, $g(x,y,z)\in L^2(\Omega\times\{0,h\})$, $j(z)\in L^2(0,h)$ is compatible if
$$\int_{\Omega\times[0,h]}f(x,y,z) \,dx\,dy\,dz = \int_0^h j(z)\,dz + \int_{\Omega\times\{0,h\}} \lambda(z)g(x,y,z) \,dx\,dy.$$
A pair $(a_L, a_\nu)$ of forcing terms is compatible if $a_L \in L^1\left([0,T]; L^2(\Omega\times[0,h])\right)$, and $a_\nu \in L^1\left([0,T]; L^2(\Omega\times\{0,h\})\right)$ for all $T>0$ with 
$$ \int_{\Omega\times[0,h]}a_{L}(x,y,z)\,dx\,dy\,dz = \int_{\Omega\times\{0,h\}}\lambda(z) a_\nu(x,y,z) \,dx\,dy $$
\end{definition}
Next, we define the notion of weak solutions to the transport equations in (QG).
\begin{definition}\label{weaktransport}
Let $T>0$ be given and $\Psi(t,x,y,z):[0,T]\times\Omega\times[0,h]\rightarrow \mathbb{R}$ be such that $\nabla\Psi, \mathcal{L}(\Psi)\in L^\infty([0,T];L^2(\Omega\times[0,h]))$, $\partial_\nu\Psi\in L^\infty\left( [0,T];L^2(\Omega\times\{0,h\}) \right) $.  Then $\Psi$ is a weak solution to the transport equations in (QG) on $[0,T]$ with initial data $f_0$ and $g_0$ and forcing $a_L$, $a_\nu$ if for all $\tilde{\Omega}$ compactly contained in $\Omega$ and smooth test functions $\phi(t,x,y,z)$ compactly supported in $[-1, T+1]\times\tilde{\Omega}\times[-1,h+1]$ 
\begin{align*}
-\int_0^T \int_{\tilde{\Omega}\times[0,h]} &\left(\left( \partial_t \phi + \grad^\perp \Psi \cdot \grad \phi  \right)\left( \mathcal{L}(\Psi) + \beta_0 y \right) + \phi a_L  \right)\,dx\,dy\,dz\,dt \\
& \qquad = \int_{\tilde{\Omega}\times[0,h]} \phi|_{t=0}f \,dx\,dy\,dz
\end{align*}
and
\begin{align*}
\int_0^T \int_{\tilde{\Omega}\times\{0,h\}} \left(\left( \partial_t \phi + \grad^\perp \Psi \cdot \grad \phi  \right) \partial_{\nu}\Psi + \phi a_\nu  \right) \,dx\,dy\,dt = -\int_{\tilde{\Omega}\times\{0,h\}} \phi|_{t=0}g \,dx\,dy
\end{align*}
\end{definition}
We can now state our existence result.  
\begin{theorem}\label{main}
Let $(f_0,g_0,j_0)$ and $(a_L,a_\nu)$ satisfy \cref{compatibility}.  Then there exists a global weak solution $\Psi$ to (QG) such that 
\begin{enumerate}
\item $\mathcal{L}(\Psi)|_{t=0} = f_0$, $\dnu|_{t=0}=g_0$ and $\Psi$ satisfies \cref{weaktransport} for any $T>0$
\item There exists $c(t,z)$ such that for almost every time $t>0$, $\Psi(t)|_{\partial\Omega\times[0,h]} = c(t,z)$ 
\item For all $t>0$, $\grad\Psi(t)\cdot\nu_s \in L^2(\partial\Omega\times[0,h])$. If $j_0\in H^\frac{1}{2}(0,h)$, then
 $$ \int_{\partial\Omega\times\{z\}}  \grad\Psi(t)\cdot \nu_s \,d\omega = j_0(z), $$
with the equality holding pointwise in $z$.
\item For all time $t$, $\left(\mathcal{L}(\Psi)(t), \dnu(t), \grad\Psi\cdot\nu_s(t)\right)$ satisfies the compatibility condition in \cref{compatibility}
\item For all  $T>0$ and $t\in[0,T]$, $\Psi$ satisfies the bound
\begin{align*}
\| \mathcal{L}(\Psi)(t) \|&_{L^2(\Omega\times[0,h])} + \| \partial_{\nu}\Psi(t) \|_{L^2(\Omega\times\{0,h\})} + \| \nabla \Psi(t) \|_{H^\frac{1}{2}(\Omega\times[0,h])}  \\
&\leq C(\Omega,h,\lambda) \left( \| f \|_{L^2}+ \| g \|_{L^2} + \| j \|_{L^2} + \| a_L \|_{L^1\left([0,T];L^2\right)} + \| a_\nu \|_{L^1\left([0,T];L^2\right)}  \right). 
\end{align*}\end{enumerate}
\end{theorem}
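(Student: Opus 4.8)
The plan is to construct $\Psi$ as a limit of approximate solutions $\Psi_n$ produced by mollifying the transport velocity, with the compactness of $\nabla\Psi_n$ coming from the elliptic regularity theorem \cref{superduperelliptic}. Fix a smoothing family $\mathcal{J}_n$ on stream functions that respects the lateral structure --- for instance truncation in the eigenbasis of the variational operator underlying \cref{superduperelliptic}, so that $\mathcal{J}_n\Psi$ still has tangentially constant lateral trace, i.e.\ still satisfies \eqref{bdrycond1}. Given $\Psi_n$, let $u_n=\grad^\perp\mathcal{J}_n\Psi_n$, which is smooth, divergence free and tangent to $\partial(\Omega\times[0,h])$; transport $\mathcal{L}(\Psi_n)+\beta_0 y$ on $\Omega\times[0,h]$ and $\partial_\nu\Psi_n$ on $\Omega\times\{0,h\}$ along its flow with forcings $a_L,a_\nu$; and recover $\Psi_n(t)$ by solving the variational problem of \cref{superduperelliptic} with data $(f_n(t),g_n(t),j_0)$, where $f_n,g_n$ are the transported quantities. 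Since the elliptic solve is linear and bounded and the transport preserves $L^2$ norms, a Cauchy--Picard/continuation argument yields $\Psi_n$ globally in time; by construction $\mathcal{L}(\Psi_n)|_{t=0}=f_0$ and $\partial_\nu\Psi_n|_{t=0}=g_0$, the compatibility relation of \cref{compatibility} holds for $\Psi_n(t)$ at all times (it is linear and preserved by a divergence-free tangent transport), and $\Psi_n$ satisfies the mollified version of the identities in \cref{weaktransport}, the lateral condition \eqref{bdrycond1}, and the weak form \cref{weakbdrycond2} of \eqref{bdrycond2} with datum $j_0$.

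I would then record the uniform estimates. Because $u_n$ is divergence free and tangent to each boundary component, $\|\mathcal{L}(\Psi_n)(t)+\beta_0 y\|_{L^2(\Omega\times[0,h])}\le\|f_0+\beta_0 y\|_{L^2}+\|a_L\|_{L^1([0,t];L^2)}$ and $\|\partial_\nu\Psi_n(t)\|_{L^2(\Omega\times\{0,h\})}\le\|g_0\|_{L^2}+\|a_\nu\|_{L^1([0,t];L^2)}$, so $\mathcal{L}(\Psi_n)(t)$ and $\partial_\nu\Psi_n(t)$ are bounded in $L^2$ uniformly in $n$ and locally uniformly in $t$. Inserting these bounds together with the fixed datum $j_0$ into \cref{superduperelliptic} gives the estimate of item (5) for $\Psi_n$: uniform control of $\|\nabla\Psi_n(t)\|_{H^{1/2}(\Omega\times[0,h])}$ and --- crucially --- of $\|\grad\Psi_n(t)\cdot\nu_s\|_{L^2(\partial\Omega\times[0,h])}$, not merely of its $z$-averages, by $\|f_0\|_{L^2}+\|g_0\|_{L^2}+\|j_0\|_{L^2}+\|a_L\|_{L^1_tL^2}+\|a_\nu\|_{L^1_tL^2}$.

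Next comes the passage to the limit. From the transport equations, $\partial_t\big(\mathcal{L}(\Psi_n)+\beta_0 y\big)$ and $\partial_t(\partial_\nu\Psi_n)$ are bounded in $L^1_t H^{-s}$ for suitable $s$; combined with the uniform $L^2$, resp.\ $H^{1/2}$, bounds and the compact embedding $H^{1/2}(\Omega\times[0,h])\hookrightarrow L^2(\Omega\times[0,h])$, an Aubin--Lions and Arzel\`a--Ascoli argument produces a subsequence along which $\mathcal{L}(\Psi_n)$ and $\partial_\nu\Psi_n$ converge weakly-$\ast$ in $L^\infty_tL^2$ to $\mathcal{L}(\Psi)$ and $\dnu$, and $\nabla\Psi_n\to\nabla\Psi$ strongly in $C([0,T];L^2(\Omega\times[0,h]))$. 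The decisive point is that the uniform $L^2(\partial\Omega\times[0,h])$ bound on $\grad\Psi_n\cdot\nu_s$ from \cref{superduperelliptic} rules out the oscillation of $\grad\Psi_n\cdot\nu_s$ on the lateral boundary that the merely averaged condition \eqref{bdrycond2} would otherwise permit, so the strong $L^2$ convergence is genuine; equivalently one may use the energy identity of Puel and Vasseur \cite{pv}, expressing $\|\nabla\Psi_n(t)\|_{L^2}^2$ as a boundary pairing of $(\mathcal{L}(\Psi_n),\partial_\nu\Psi_n,c_n,j_0)$ whose terms converge and upgrading weak to strong convergence via weak lower semicontinuity. With strong convergence of $\nabla\Psi_n$ --- and of its traces on $\Omega\times\{0,h\}$, using the regularity up to the flat part of the boundary contained in \cref{superduperelliptic} --- the nonlinear terms $\grad^\perp\Psi_n\cdot\grad\phi\,(\mathcal{L}(\Psi_n)+\beta_0 y)$ and $\grad^\perp\Psi_n\cdot\grad\phi\,\partial_\nu\Psi_n$ converge as products of a strongly convergent factor against a weakly convergent one, so every term in the mollified formulation of \cref{weaktransport} passes to the limit; likewise \eqref{bdrycond1} passes to the limit with $c_n\to c$, and so does the weak form \cref{weakbdrycond2}. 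This yields (1), (2) and the first assertion of (3); (4) passes to the limit because it is linear and the relevant traces converge; (5) for $\Psi$ follows from (5) for $\Psi_n$ by weak lower semicontinuity. For the pointwise-in-$z$ equality in (3), when $j_0\in H^{1/2}(0,h)$ the elliptic regularity theorem gives enough smoothness for $z\mapsto\int_{\partial\Omega\times\{z\}}\grad\Psi(t)\cdot\nu_s\,d\omega$ to be continuous, upgrading \cref{weakbdrycond2} to $\int_{\partial\Omega\times\{z\}}\grad\Psi(t)\cdot\nu_s\,d\omega=j_0(z)$ for every $z$.

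The main obstacle is precisely this compactness step: imposing only the averaged lateral condition \eqref{bdrycond2} does not by itself exclude rapid oscillation of $\grad\Psi_n\cdot\nu_s$ on $\partial\Omega\times[0,h]$, which would destroy the strong $L^2$ convergence of $\nabla\Psi_n$ needed for the nonlinear transport terms. Overcoming it is exactly what the elliptic regularity theorem \cref{superduperelliptic} --- with its $L^2(\partial\Omega\times[0,h])$ control of $\grad\Psi\cdot\nu_s$ and its $H^{1/2}$ control of $\nabla\Psi$ --- is for, and the bulk of the technical work lies in checking that the approximation scheme is at every stage consistent both with the weak boundary condition \cref{weakbdrycond2} and with the hypotheses of \cref{superduperelliptic}.
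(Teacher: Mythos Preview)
Your overall strategy---build regularized approximate solutions, use \cref{superduperelliptic} to get the $H^{1/2}$ bound on $\nabla\Psi_n$, extract compactness via Aubin--Lions, and pass to the limit in the weak formulation using strong convergence of $\nabla\Psi_n$ against weak convergence of $\mathcal{L}(\Psi_n)$ and $\partial_\nu\Psi_n$---matches the paper's. The differences are in how the approximate solutions are produced. The paper does not run a Cauchy--Picard iteration; instead it extends $\Psi$ to $\mathbb{R}^3$ by constants (so $\grad^\perp$ of the extension vanishes outside $\Omega$), mollifies by convolution with $\eta_\epsilon$, solves the linear transport on all of $\mathbb{R}^2\times[0,h]$ by characteristics, and then restricts to $\Omega$ to feed the elliptic solve. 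This defines an operator $S_\epsilon$ on $C([0,T];\Hilbert)$, and a Leray--Schauder fixed point (continuity via stability of characteristics, compactness via \cref{superduperelliptic} plus Aubin--Lions, bounded range via the a priori $L^2$ transport bound) produces $\Psi_\epsilon$. A consequence is that the compatibility condition of \cref{compatibility} is \emph{lost} at the approximate level---the transported $F_\epsilon$, $G_\epsilon$ leak slightly outside $\Omega$---and is only recovered in the limit; this is why the paper carefully distinguishes the variational problem $(V)$ from the PDE $(E)$.

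Your variant, keeping the transport on $\Omega\times[0,h]$ with a velocity tangent to $\partial\Omega$ and thereby preserving compatibility at every stage, is cleaner in that respect, but two steps need more care than you give them. First, the smoothing $\mathcal{J}_n$: spectral truncation in the $\Hilbert$-eigenbasis is natural, but you must check that finitely many eigenfunctions of this nonstandard mixed problem are smooth up to the closed cylinder (including the edges $\partial\Omega\times\{0,h\}$) so that $\grad^\perp\mathcal{J}_n\Psi$ is genuinely Lipschitz and tangent; the paper's convolution-after-extension sidesteps this entirely. Second, a bare Cauchy--Picard in $L^2$ for the coupled system $(f_n,g_n)\mapsto\Psi_n\mapsto u_n\mapsto$ transport is not obviously a contraction: $u_n$ depends linearly on $(f_n,g_n)$, so $u_n\cdot\grad f_n$ is quadratic, and you need either to work in a smoother topology or to regularize the data as well. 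The paper's Leray--Schauder argument replaces Lipschitz dependence by compactness, which is exactly what \cref{superduperelliptic} supplies; if you prefer your route, you should say explicitly how you close the iteration.
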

\subsection{Inviscid Geostrophic Flows}\label{thesection}
Mathematical inquiry into (QG) is by now quite extensive.  Beale and Bourgeois \cite{bb} and Desjardins and Grenier \cite{dg} provided derivations of the 3D system from primitive equations.  As we are concerned with the inviscid model, our derivation follows that of Beale and Bourgeois. Puel and the second author \cite{pv} proved global existence results for initial data $\Psi_0$ such that $\mathcal{L}(\Psi_0), \nabla\Psi_0 \in L^2(\Rthreeplus)$, $\dnu_0 \in L^2(\mathbb{R}^2)$.  The first author \cite{novackweak} extended this result to initial data belonging to non-Hilbert Lebesgue spaces and identified the critical regularity at which the system conserves energy.

Study of the closely related surface quasi-geostrophic equation was initiated by Constantin, Majda, and Tabak \cite{cmt}.  To obtain SQG from (QG), one simplifies the model by assuming that $\lambda(z) \equiv 1$, $\beta_0 =0$, $a_L \equiv a_\nu \equiv 0$, and
$$ \Delta\Psi|_{t=0}=0. $$
As a result, $\Delta\Psi(t)\equiv 0$ uniformly in time, and the entire dynamic is encoded in the equation for $\theta = -\partial_z\Psi|_{z=0} = \half \Psi$
\begin{align}
\partial_t \theta + \mathcal{R}^\perp \theta \cdot \grad \theta = 0. \label{sqg} 
\end{align}
Resnick proved global existence of weak solutions for initial data in $L^2(\mathbb{T}^2)$ \cite{Resnick}.  Marchand extended Resnick's result to initial data belonging to $L^p(\mathbb{R}^2)$ or $L^p(\mathbb{T}^2)$ for $p>\frac{4}{3}$ \cite{Marchand}.  Both the proofs of Resnick and Marchand are based on a reformulation of the nonlinear term using a C\'{a}lderon commutator. 

To study \eqref{sqg}, it is common to add a dissipative term $\fractional \theta$.  The case $\alpha = \frac{1}{2}$ is physical and comes from considering viscous effects which produce Ekman layers at the boundary.  In the critical case $\alpha = \frac{1}{2}$, global regularity is known by different methods.  Proofs are given by Kiselev, Nazarov, and Volberg \cite{knv}, Caffarelli and the second author \cite{cv}, Constantin and Vicol \cite{cvicol}, and Kiselev and Nazarov \cite{Kiselev2010}.  Using the De Giorgi technique from \cite{cv} in combination with a bootstrapping argument and an appropriate Beale-Kato-Majda type criterion, the authors established global regularity for the full 3D system with critical dissipation in \cite{novackvasseur}. Buckmaster, Shkoller, and Vicol used the method of convex integration to show that one may prescribe any positive smooth profile for the Hamiltonians of both inviscid and dissipative SQG \cite{2016arXiv161000676B}.

The techniques used to produce weak solutions by Resnick and Marchand were adapted to bounded domains in a series of papers. In these works the Riesz transform on a bounded domain $\Omega$ is defined spectrally using eigenfunctions of the homogenous Dirichlet laplacian. First, Constantin and Ignatova \cite{ci}, \cite{ci2} proved nonlinear bounds and commutator estimates for the fractional laplacian and showed the existence of global weak solutions as well as derived interior regularity estimates for \eqref{sqg} with added critical dissipation in bounded domains. Constantin and Nguyen \cite{cn}, \cite{cn2} then showed the existence of global weak solutions of \eqref{sqg} in bounded domains as well as local and global strong solutions for supercritical and critical/subcritical versions of \eqref{sqg}, respectively.  

The weak solutions we construct \textit{cannot coincide} in general with solutions to \eqref{sqg} constructed using the spectral Riesz transform. The difference lies in the boundary conditions \eqref{bdrycond1} and \eqref{bdrycond2}.  At each time $t$, we reconstruct $\Psi$ by solving the elliptic problem
\[
\begin{dcases}
       \mathcal{L}(\Psi) = f &  \Omega\times[0,h] \\
       \partial_{\nu} \Psi = g & \Omega \times \{0,h\}\\
       \Psi(x,y,z) = c(z) \qquad & \partial\Omega \times \{z>0\}\\
       \int_{\partial\Omega\times\{z\}}  \grad \Psi\cdot \nu_s =j_0(z) & [0,h].
       \end{dcases}
\]
In particular, we do not require that the stream function $\Psi$ vanishes uniformly on the lateral boundary. While we consider the case of finite height $h$, the boundary conditions we impose would apply in the case of infinite height as well, which is the most common setting for SQG.

Conversely, let $\{ e_n \}$ be the orthonormal basis of eigenfunctions with corresponding eigenvalues $\{\lambda_n\}$ for the homogenous Dirichlet laplacian $-\lap_\Omega$ on $\Omega$, and let 
$$ \theta = \sum_n a_n(t) e_n(x,y) $$
be a solution to \eqref{sqg} posed on the bounded domain $\Omega$. Then the stream function $\Psi|_{z=0}$ is given by 
$$ \Psi|_{z=0} = \left( -\lap_\Omega \right) ^{-\frac{1}{2}} \theta = \sum_n a_n(t) \lambda_n^{-\frac{1}{2}} e_n(x,y),$$
and the harmonic extension for $z\in[0,\infty)$ is given by 
$$ \Psi(t,x,y,z)  = \sum_n a_n(t) e^{-z\sqrt{\lambda_n}} \lambda_n^{-\frac{1}{2}} e_n(x,y). $$
With this definition, $\Psi$ vanishes uniformly on $\partial\Omega\times[0,\infty)$.  In addition, if one were to impose \eqref{bdrycond2} on a solution to \eqref{sqg}, then integrating by parts in $(x,y)$ and passing the integral inside the sum gives
$$ \sum_n a'_n(t)  e^{-z\sqrt{\lambda_n}} \lambda_n^\frac{1}{2} \left( \int_{\Omega} e_n(x,y) \,dx\,dy \right) =0 $$
for all $z>0$.  One can see that this is only satisfied if 
$$ a'_n(t)  \left( \int_{\Omega} e_n(x,y) \,dx\,dy \right) =0 $$
for all $n$ and $t>0$, which cannot hold for any bounded domain $\Omega$ and initial data.  
The outline of this paper is as follows; in \cref{derivation}, we recall the derivation of the system from primitive equations while accounting for the impermeability. In \cref{ellipticsection}, we produce a solution to the stationary elliptic problem associated to the operator $\mathcal{L}$ and prove an elliptic regularity theorem for the solution. Finally, in \cref{mainsection}, we construct global weak solutions to (QG).

\section{Derivation from Primitive Equations}\label{derivation}
\subsection{Primitive Equations and Re-Scalings}
We begin from the so-called primitive equations following the derivation of Bourgeois and Beale \cite{bb}.  These equations represent the geostrophic balance, which is the balance of the pressure gradient with the Coriolis force.  The Boussinesq approximation has been made; that is, changes in density are ignored except when amplified by the effect of gravity.  After a re-scaling of the equations, a parameter which varies inversely with the speed of the rotation of the earth called the Rossby number shall appear.  Then performing a perturbation expansion in the Rossby number $\epsilon$ will yield the stratified system and boundary conditions \eqref{bdrycond1} and \eqref{bdrycond2}. Given a smooth, bounded set $\Omega \subset \mathbb{R}^2$ and a fixed height $h$, the following equations (after rescaling) will be posed on the cylindrical domain
$$ \Omega\times[0,h] . $$
We use the notation $\frac{D}{Dt}=\partial_t + \vec{u}\cdot \nabla$ for the material derivative, and the Coriolis force $\C=2\Theta \sin(\theta)$, where $\Theta$ is the angular velocity of the Earth and $\theta$ is the latitude.  Here $(u,v,w)$ is the fluid velocity, $p$ is the pressure and $\rho$ is the variation in density from a known background density profile $\bar{\varrho}(z)$.  That is, the density $\varrho$ satisfies
$$ \varrho = \bar{\varrho}(z) + \rho(x,y,z,t). $$
We further assume that the density is decreasing in $z$ and that $-\rho_z$ is bounded above and below away from zero. Throughout, we assume throughout that the fluid velocity is tangent to the boundary.

The primitive equations then are
\[
\begin{dcases}
 \frac{Du}{Dt} - \C v = -p_x \\
 \frac{Dv}{Dt} + \C u = -p_y \\
\frac{Dw}{Dt} +\rho g = -p_z \\
\nabla\cdot u =0 \\
\frac{D\varrho}{Dt} = 0. \\ 
\end{dcases}
\]
We rescale the equations in such a way so as to remove solutions which vary on a fast time scale.  Therefore, we set
$$ t=\frac{L}{U}t', \qquad u=Uu', \qquad (x,y,z)=L(x',y',z'). $$
Letting $\theta_0$ be a central latitude, we estimate $\C$ using the linear $\beta$-plane approximation by $$\C=2\Theta \sin(\theta_0) + 2\Theta \cos(\theta_0)(\theta-\theta_0):=\C_0 + 2\Theta \cos(\theta_0)(\theta-\theta_0).$$  The Rossby number $\epsilon$ is equal to $\frac{U}{\C_0 L}$. Set $\beta_0 = \frac{\cot(\theta_0)}{\epsilon}\frac{L}{r_0}$. We then have that 
\begin{align*}
\mathcal{C}&=2\Theta \sin(\theta_0) + 2\Theta \cos(\theta_0)(\theta-\theta_0) \\
&= \mathcal{C}_0(1+\epsilon \beta_0 y').
\end{align*}
We assume that $\frac{L}{r_0}$ is $O(\epsilon)$, allowing us to keep the factor of $\epsilon$ in front of $\beta_0$ even as $\epsilon\rightarrow 0$.  We scale the density variation by
$$ \rho = \frac{\C_0 U}{g} \rho' = \frac{U^2}{\epsilon L g} \rho' $$
and the reference density by 
$$ \bar{\varrho} = \frac{U^2}{\epsilon^2 L g} \bar{\varrho}' \ $$
This allows us to write the density non-dimensionally as
$$ \varrho = \frac{U^2}{\epsilon^2 L g}(\bar{\varrho}'(z)+ \epsilon \rho' ) $$
Finally, we scale the pressure by $p= \C_0 UL p'$.  Applying the scalings to the primitive equations, we obtain 
\[ \begin{dcases}
\frac{Du'}{Dt'} - \frac{1}{\epsilon}(1+\epsilon \beta_0 y')v' = -\frac{1}{\epsilon}p'_{x'} \\
\frac{Dv'}{Dt'} + \frac{1}{\epsilon}(1+\epsilon \beta_0 y')u' = -\frac{1}{\epsilon}p'_{y'} \\
\frac{Dw'}{Dt'} +\frac{1}{\epsilon}\rho'  = -\frac{1}{\epsilon}p'_{z'} \\
\nabla\cdot u' =0 \\
\frac{D\rho'}{Dt'} + \frac{1}{\epsilon} w'\bar{\varrho}'_{z'} = 0 .\\
\end{dcases}
\]
Let us abuse notation and drop the primes on our scaled equations.  Assume that the expansions 
$$ \vec{u} = \vec{u}(\epsilon) = \vec{u}^{(0)} + \epsilon \vec{u}^{(1)} + O(\epsilon^2)$$
and 
$$ \rho = \rho(\epsilon) = \rho^{(0)} + \epsilon \rho^{(1)} + O(\epsilon^2) $$
hold. Plugging this ansatz in, we obtain the zero-order equations
$$ v^{(0)} = p_x^{(0)}, \qquad u^{(0)} = -p_y^{(0)}, \qquad \rho^{(0)} = -p_z^{(0)}, \qquad w^{(0)}=0.$$
The last equation follows from the first two equations, the incompressibility (which gives that $w_z^{(0)}=0$), and the assumption that $w^{(0)}\equiv 0$ on the top and bottom of the domain. 

We move now to the first order equations.  Let us introduce the notation
$$ d_g = \partial_t - p_y^{(0)} \frac{\partial}{\partial_x} + p_x^{(0)} \frac{\partial}{\partial_y} $$
for the zero order geostrophic material derivative.  The first order equations are then
\[\begin{dcases}
d_g(-p_y^{(0)}) - v^{(1)} - \beta_0 y p_x^{(0)} = -p_{x}^{(1)} \\
d_g(p_x^{(0)}) + u^{(1)} - \beta_0 y p_y^{(0)} = -p_{y}^{(1)} \\
\rho^{(1)} = -p_{z}^{(1)} \\
\nabla \cdot u^{(1)}=0 \\
d_g(-p_z^{(0)}) + w^{(1)}\varrho_{z} = 0 .
\end{dcases}
\]
Let us divide the last equation by $-\frac{1}{\varrho_z}$.  We introduce the notation 
$$ \tilde{\nabla} = (\partial_x, \partial_y, -\frac{1}{\varrho_z}\partial_z). $$
Then we can consolidate the first order equations as
\begin{align}
d_g(\tilde{\nabla}p^{(0)}) + \beta_0(p^{(0)},0,0)^t &= (-p_y^{(1)}, p_x^{(1)},0)^t - (u^{(1)},v^{(1)}, w^{(1)})^t \nonumber \\
&\qquad  - \beta_0y(-p_y^{(0)}, p_x^{(0)}, 0)^t+ \beta_0(p^{(0)},0,0)^t. \label{gradienteqn}
\end{align}
Note that the right-hand side is divergence free and has no vertical component on the top and bottom boundaries of the domain. 
\subsection{Transporting $\mathcal{L}(\Psi)$ and $\dnu$}
We now take the divergence of \eqref{gradienteqn} in order to arrive at (QG).  As noted, the divergence of the right hand side is zero.  The divergence of $ \beta_0(p^{(0)},0,0)^t$ is $\beta_0 p_x^{(0)}$.  Examining the transport term  $d_g(\tilde{\nabla}p^{(0)})$ and calculating $\partial_z$ of the third component, we obtain
$$d_g(\partial_z(e_3 \cdot \tilde{\nabla}p^{(0)}))+ \partial_z u^{(0)}\partial_x (e_3 \cdot \tilde{\nabla}p^{(0)}) + \partial_z v^{(0)}\partial_y (e_3 \cdot \tilde{\nabla}p^{(0)}).$$
Using the fact that $u^{(0)}=-p_y^{(0)}$ and $v^{(0)}=p_x^{(0)}$, the second two terms cancel each other out.  The horizontal divergence $(\partial_x, \partial_y, 0)$ of $ d_g(\tilde{\nabla}p^{(0)})$ is easy to calculate from the stratification and the divergence free nature of the zero-order flow. We arrive at the equation 
$$ \left( \partial_t - p_y^{(0)}\partial_x + p_x^{(0)}\partial_y \right)  \left( p_{xx}^{(0)} + p_{yy}^{(0)} + (\lambda p_z^{(0)})_z + \beta_0 y \right) = 0 $$
after absorbing the $\beta$-plane term into the material derivative and defining $\lambda = -\frac{1}{\varrho_z}$.  Note that by the assumptions on the density, there exists $\Lambda$ such that $\frac{1}{\Lambda} \leq \lambda \leq \Lambda$. We shall use the notation $\Psi$ for the stream function $p^{(0)}$, allowing us to rewrite the system in the familiar form
\begin{align}\label{transportinside}
\left( \partial_t + \grad^\perp \Psi\cdot\grad\right) \left( \mathcal{L}(\Psi) + \beta_0 y \right)=0 .
\end{align}
Consider now the top and bottom $\Omega \times \{0\}$ and $\Omega \times \{h\}$.  Let $\nu$ denote the unit normal vector on the top and bottom. Considering the equation
$$d_g(-p_z^{(0)}) + w^{(1)}\varrho_{z} = 0,$$
using that $w^{(1)}\equiv 0$ on the top and bottom, and substituting the notation $\Psi$ for the stream function, we obtain
\begin{align}\label{transportboundary}
\left( \partial_t + \grad^\perp \Psi\cdot\grad\right) (\partial_{\nu}\Psi) =0.
\end{align}
\subsection{The Lateral Boundary}\label{bdrysection}
Now consider the sides $\partial\Omega \times [0,h]$ equipped with a horizontal normal vector $\nu_s$.  First, the impermeability requires that $\grad^\perp p^{(0)}\cdot \nu_s = 0$, implying that $p^{(0)}$ is constant on $\partial\Omega\times\{z\}$. Recalling that the stream function $\Psi=p^{(0)}$, we have that 
\begin{align}\label{constantonboundary}
\Psi(t,y,x,z)|_{\{\partial\Omega\times[0,h]\}}=c(t,z)
\end{align}
for some unknown function $c(t,z)$.

Let us next take the dot product of \eqref{gradienteqn} with $\nu_s$.  Due to the impermeability of the boundary, $$(u^{(1)}, v^{(1)}, w^{(1)})^t \cdot \nu_s =0. $$  
In addition, 
$$ (-p_y^{(1)}, p_x^{(1)},0)^t \cdot \nu_s = -(p_x^{(1)}, p_y^{(1)},0)^t \cdot \tau $$
where $\tau$ is the positively oriented tangent vector perpendicular to $\nu_s$.  Then we integrate around the boundary $\partial\Omega\times\{z\} \subset \partial\Omega\times [0,h]$ at a fixed height $z$.  Since $(p_x^{(1)}, p_y^{(1)},0)^t$ is a conservative vector field, 
$$ \int_{\partial\Omega\times\{z\}} (p_x^{(1)}, p_y^{(1)},0)^t \cdot \tau \,d\omega = 0 .$$
Notice that 
$$ \beta_0y(-p_y^{(0)}, p_x^{(0)}, 0)^t - \beta_0(p^{(0)},0,0)^t$$
is also the two-dimensional curl $\grad^\perp$ of the scalar field $-\beta_0 yp^{(0)}$.  Then we have that 
$$ \grad^\perp(-\beta_0 yp^{(0)}) \cdot \nu_s = \grad (\beta_0 yp^{(0)}) \cdot \tau .$$
As this is also a conservative vector field, the integral of this term around the boundary vanishes as well.  Thus we are left with
\begin{align}\label{remainder}
\int_{\partial\Omega\times\{z\}} (d_g \tilde{\nabla}p^{(0)})\cdot \nu_s \,d\omega = -\int_{\partial\Omega\times\{z\}} (\beta_0 p^{(0)},0,0)\cdot \nu_s \,d\omega.
\end{align}
Using \eqref{constantonboundary} shows that
$$ -\int_{\partial\Omega\times\{z\}} (\beta_0 p^{(0)},0,0)\cdot \nu_s \,d\omega $$
is zero.  Substituting in the stream function notation and applying the divergence theorem to the nonlinear term on the left hand side of \eqref{remainder}, we have that
\begin{align}
\int_{\partial\Omega\times\{z\}}  &\left(-p_y^{(0)}\partial_x \grad p^{(0)} + p_x^{(0)}\partial_y \grad p^{(0)}\right) \cdot \nu_s \,d\omega = \int_{\partial\Omega\times\{z\}} \grad\cdot \left( \grad^\perp\Psi \cdot \grad\grad\Psi \right) \cdot \nu_s \,d\omega \nonumber\\
&\qquad\qquad = \int_{\Omega\times\{z\}} \grad\grad^\perp \Psi : \grad\grad\Psi \,dx\,dy + \int_{\Omega\times\{z\}} \grad^\perp\Psi \cdot \grad \lap \Psi \,dx\,dy \nonumber\\
&\qquad\qquad =\int_{\Omega\times\{z\}} \grad\cdot\left( \grad^\perp\Psi \lap \Psi  \right) \,dx\,dy  \nonumber\\
&\qquad \qquad = \int_{\partial\Omega\times\{z\}} \lap \Psi \left(\grad^\perp\Psi \cdot \nu_s\right) \,d\omega \nonumber\\
&\qquad \qquad =0. \nonumber
\end{align}
Utilizing once again the notation $\Psi$ for the stream function, \eqref{remainder} therefore becomes
\begin{align}\label{averageneumann}
\frac{\partial}{\partial t}\int_{\partial\Omega\times\{z\}} ( \grad\Psi)\cdot \nu_s \,d\omega =0.
\end{align}
Collecting \eqref{transportinside}, \eqref{transportboundary}, \eqref{constantonboundary}, and \eqref{averageneumann}, we have formally derived the following system:
\[\begin{dcases}
\left( \partial_t + \grad^\perp \Psi \cdot \grad \right) \left( \mathcal{L}(\Psi) + \beta_0 y \right)=0 & \Omega\times[0,h] \\
        \left( \partial_t + \grad^\perp \Psi \cdot \grad \right) (\partial_{\nu}\Psi) =0 & \Omega\times\{0,h\}\\
      \frac{\partial}{\partial t}\int_{\partial\Omega\times\{z\}} ( \grad\Psi)\cdot \nu_s \,d\omega =0 & [0,h]\\
      \Psi=c(t,z) & \partial\Omega\times[0,h].
       \end{dcases}
\]
\section{The Elliptic Problem}\label{ellipticsection}
\subsection{Building a solution in $L^2$}
In order to show global existence of weak solutions to the time-dependent problem, we first solve the stationary elliptic problem which is transported by the fluid velocity $\grad^\perp\Psi$.   The elliptic operator is given by $\mathcal{L}$. The boundary conditions for the elliptic problem will be mixed in nature. We first impose a Neumann condition on the top and bottom of $\Omega\times[0,h]$ coming from the transport equation for $\dnu$. The condition that $$\Psi(t,x,y,z)|_{\partial\Omega\times[0,h]}=c(t,z)$$ will be structured into the Hilbert space within which we solve the elliptic problem.  Finally, the equation 
$$ \frac{\partial}{\partial t}\int_{\partial\Omega\times\{z\}} \grad \Psi\cdot \nu_s \,d\omega =0$$
means that 
$$ \int_{\partial\Omega\times\{z\}} \grad \Psi(t)\cdot \nu_s \,d\omega =\int_{\partial\Omega\times\{z\}} \grad \Psi(0)\cdot \nu_s \,d\omega =:j(z) $$
is determined from the initial data, and thus will be incorporated into the data of the elliptic problem. We now provide a weak formulation of this condition for (QG).
\begin{definition}\label{weakbdrycond2}
Let $T>0$ be given and $\Psi(t,x,y,z):[0,T]\times\Omega\times[0,h]$ be such that $\nabla\Psi, \mathcal{L}(\Psi)\in L^\infty([0,T];L^2(\Omega\times[0,h]))$, and for each time, $\Psi$ has mean value zero. Then we say that $\Psi$ satisfies \eqref{bdrycond2} weakly if there exists $j_0(z):[0,h]\rightarrow\mathbb{R}$ such that for each compactly supported smooth function $\phi(t,z): [0,T]\times[0,h]\rightarrow\mathbb{R}$,
$$\int_0^T\int_{\Omega\times[0,h]} \mathcal{L}(\Psi) \phi(t,z) - \Psi \partial_z \left( \lambda \partial_z \phi(t,z) \right)  \,dx\,dy\,dz\,dt= \int_0^T\int_0^h \phi(t,z) j_0(z) \,dz\,dt. $$
\end{definition}
An integration by parts shows that for smooth functions of time and space, \eqref{bdrycond2} is equivalent to \cref{weakbdrycond2}.  Indeed, 
\begin{align*}
\int_0^T\int_{\Omega\times[0,h]} &\mathcal{L}(\Psi) \phi(t,z) - \Psi \partial_z \left( \lambda \partial_z \phi(t,z) \right) \,dx\,dy\,dz\,dt\\
&= \int_0^T\int_{\Omega\times[0,h]} \mathcal{L}(\Psi) \phi(t,z) - \partial_z \left( \lambda \partial_z \Psi \right) \phi(t,z) \,dx\,dy\,dz\,dt\\
& = \int_0^T \int_{\Omega\times[0,h]} \left(\partial_{xx}\Psi + \partial_{yy}\Psi\right) \phi(t,z) \,dx\,dy\,dz\,dt\\
& = \int_0^T \int_0^h \int_{\partial\Omega} \phi(t,z) \grad\Psi\cdot \nu_s \,d\omega\,dz\,dt
\end{align*}

Thus we consider the elliptic problem for the unknown function $u$ with data $f:\Omega\times[0,h] \rightarrow \mathbb{R}$, $g:\Omega\times\{0,h\}\rightarrow \mathbb{R}$, and $j:[0,h]\rightarrow \mathbb{R}$. 
\begin{equation*}
(E) = 
\begin{mcases}[ll@{\ }l]
       \mathcal{L}(u) = f &  \Omega\times[0,h] & \quad(E1)\\
       \partial_{\nu} u = g & \Omega \times \{0,h\} & \quad(E2)\\
       u(x,y,z) = c(z) & \partial\Omega \times [0,h]& \quad(E3)\\
       \int_{\partial\Omega\times\{z\}}  \grad u\cdot \nu_s =j(z) & [0,h]& \quad(E4). 
\end{mcases}
\end{equation*}
Let us remark that to formulate $(E)$ variationally, it is not necessary for $(f,g,j)$ to satisfy the compatibility condition \cref{compatibility}.  Indeed our construction of approximate solutions will introduce a small error in the condition of \cref{compatibility} which will vanish in the limit. Thus when we say that $u$ is a solution to $(E)$, we generally mean it in the variational sense of $(V)$ (see \eqref{V} below).  If in addition, $(f,g,j)$ satisifes the compatibility condition so that $(V)$ is equivalent to $(E)$, we shall make note of this. To solve $(V)$ we require a specially constructed Hilbert space.
\begin{definition}
Define $H$ by
$$ H := \{ \alpha \in C^\infty\left(\bar{\Omega}\times[0,h]\right): \quad \int_{\Omega\times[0,h]}\alpha \,dx\,dy\,dz=0, \quad  \alpha|_{\partial\Omega \times [0,h]}(x,y,z)=\alpha(z) \}.$$
Using the notation $\tilde{\nabla}=(\partial_x,\partial_y, \lambda(z) \partial_z)$, equip $H$ with the inner product
$$ \langle \alpha, \gamma \rangle_\Hilbert := \int_{\Omega\times[0,h]} \tilde{\nabla} \alpha \cdot {\nabla}{\gamma} \,dx\,dy\,dz. $$
Define the Hilbert space $\Hilbert$ as the closure of $H$ under the norm induced by this inner product.
\end{definition}
By standard trace inequalities and Poincar\'{e}'s inequality, we have that for $\gamma\in\Hilbert$
\begin{align}
\| \gamma \|_{H^\frac{1}{2}(\partial(\Omega\times[0,h]))} \leq C(\Omega,h) \left( \| \gamma \|_{L^2(\Omega\times[0,h])} + \| \nabla \gamma \|_{L^2(\Omega\times[0,h])} \right) \leq C(\Omega,h,\lambda) \| \gamma \|_\Hilbert \label{trace}
\end{align}
We define a bilinear form $B(\alpha,\gamma):\Hilbert\times\Hilbert\rightarrow\mathbb{R}$ and functional $F(\gamma):\Hilbert\rightarrow\mathbb{R}$ by
$$ B(\alpha,\gamma) = \int_{{\Omega}\times[0,h]} \tilde{\nabla} \alpha \cdot {\nabla} \gamma \,dx\,dy\,dz$$
and 
$$ F(\gamma) = -\int_{{\Omega\times[0,h]}}{f \gamma}\,dx\,dy\,dz +  \int_{\Omega\times\{0,h\}} \lambda g \gamma \,dx\,dy + \int_0^h j(z) \gamma|_{\partial_\Omega\times\{z\}}\,dz.$$
The coercivity and continuity of the bilinear form $B$ is immediate from the assumptions on $\lambda(z)$ and the definition of $\Hilbert$.  In addition, we have that
\begin{align}
|F(\gamma)| &\leq \|f\|_{L^2(\Omega\times[0,h])} \| \gamma \|_{L^2(\Omega\times[0,h])} + \| \lambda \|_{L^\infty(0,h)} \| g \|_{L^2(\Omega\times\{0,h \})} \| \gamma \|_{L^2(\Omega\times\{0,h\})} \nonumber\\
&\qquad\qquad + \| j \|_{\left(H^\frac{1}{2}(\partial\Omega\times[0,h])\right)^*} \| \gamma \|_{H^\frac{1}{2}(\partial\Omega\times[0,h])}\nonumber\\
&\leq C(\Omega,h,\lambda) \left( \| f \|_{L^2} + \| g \|_{L^2} + \|  j \|_{\left(H^{\frac{1}{2}}\right)^*} \right) \| \gamma \|_\Hilbert \label{laxm}
\end{align}
after applying H\"{o}lder's inequality and \eqref{trace}.
Applying the Lax-Milgram theorem, we obtain a unique solution $u\in\Hilbert$ to the variational problem
\begin{align}\label{V}
B(u, \gamma) = F(\gamma)  \qquad \forall \gamma \in \Hilbert.  \qquad \qquad (V) 
\end{align}
Let us rigorously state the results of the above argument.

\begin{lemma}\label{elliptic}
For any data $f\in L^2(\Omega\times[0,h])$, $g \in L^2(\Omega\times\{0,h\})$, and $j \in \left(H^{\frac{1}{2}}([0,h])\right)^*$ there exists a unique solution $u \in \Hilbert$ to the variational problem $(V)$
with $$\|u\|_{\Hilbert} \leq C(\Omega,h,\lambda) \left( \| f \|_{L^2} + \| g \|_{L^2} + \|  j \|_{\left(H^{\frac{1}{2}}\right)^*} \right).$$
If in addition $(f,g,j)$ verifies the compatibility condition in \cref{compatibility}, then
\begin{enumerate}
\item $(E1)$ is satisfied in the weak sense
\item $(E2)$ is satisfied in the weak sense
\item $(E3)$ is satisfied pointwise
\item $(E4)$ is satisfied weakly. That is, for $\phi\in C^\infty$ depending only on $z$, $$\int_{\Omega\times[0,h]} \mathcal{L}(u) \phi(z) - u \partial_z \left( \lambda \partial_z \phi(z) \right) \,dx\,dy\,dz = \langle j, \phi \rangle $$ 
where $\langle \cdot, \cdot \rangle$ denotes duality between $\left(H^\frac{1}{2}\right)^*$ and $H^\frac{1}{2}$.
\end{enumerate}
\end{lemma}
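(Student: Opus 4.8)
The existence of a unique $u\in\Hilbert$ solving $(V)$, together with the displayed bound, is immediate from Lax--Milgram: the bilinear form $B$ is coercive and continuous because $\tfrac1\Lambda\le\lambda\le\Lambda$ and because on $H$ the norm $\|\cdot\|_\Hilbert$ is equivalent to the full $H^1(\Omega\times[0,h])$ norm (Poincaré--Wirtinger, using the mean-zero constraint), while the continuity of $F$ with the stated constant is precisely \eqref{laxm}. The substance of the lemma is items (1)--(4), and for all of them the recurring device is this. Given a smooth $\phi$ on $\bar\Omega\times[0,h]$ that is constant on each lateral slice $\partial\Omega\times\{z\}$ -- in particular any $\phi\in C^\infty_c(\Omega\times[0,h])$, or any $\phi=\phi(z)$ -- let $\bar\phi$ be its average over $\Omega\times[0,h]$ and set $\gamma:=\phi-\bar\phi$. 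Then $\gamma\in H\subset\Hilbert$, since subtracting a constant keeps the ``constant on lateral slices'' property and enforces mean zero. Because $\nabla\bar\phi=0$ one has $B(u,\gamma)=B(u,\phi)$, whereas in $F$ the constant contributes $\bar\phi\big(-\int_{\Omega\times[0,h]}f+\int_{\Omega\times\{0,h\}}\lambda g+\langle j,1\rangle\big)$, i.e.\ $\bar\phi$ times the compatibility defect, which vanishes under \cref{compatibility}. Hence $B(u,\phi)=F(\phi)$ for every such $\phi$, even though $\phi$ itself need not lie in $\Hilbert$.

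Taking $\phi\in C^\infty_c(\Omega\times(0,h))$ gives $\int_{\Omega\times[0,h]}\tilde{\nabla}u\cdot\nabla\phi=-\int_{\Omega\times[0,h]}f\phi$, i.e.\ $\mathcal L(u)=f$ in $\mathcal D'(\Omega\times(0,h))$; since $f\in L^2$, this is $(E1)$. For $(E2)$ I take $\phi\in C^\infty(\bar\Omega\times[0,h])$ vanishing near the lateral boundary but not near the top and bottom; integrating $B(u,\phi)=\int\tilde{\nabla}u\cdot\nabla\phi$ by parts produces only the conormal term on the horizontal faces, and the conormal derivative attached to $\tilde{\nabla}$ equals $\lambda\,\partial_\nu u$ on both the top and the bottom, so $B(u,\phi)=-\int\mathcal L(u)\phi+\int_{\Omega\times\{0,h\}}\lambda(\partial_\nu u)\phi$, while $F(\phi)=-\int f\phi+\int_{\Omega\times\{0,h\}}\lambda g\phi$. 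Cancelling the interior integrals via $(E1)$ leaves $\int_{\Omega\times\{0,h\}}\lambda(\partial_\nu u-g)\phi=0$ for all admissible traces $\phi|_{\Omega\times\{0,h\}}$ (which exhaust a dense subset), and since $\lambda$ is bounded below this is $(E2)$ in the weak ($H^{-1/2}$) sense -- the pointwise $L^2$ statement being deferred to the elliptic regularity theorem.

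Item $(E3)$ is structural: by \eqref{trace} the trace map $\Hilbert\to H^{1/2}(\partial(\Omega\times[0,h]))$ is bounded, and the subspace of $H^{1/2}(\partial\Omega\times[0,h])$ of functions that are constant on each slice $\partial\Omega\times\{z\}$ is closed, being the kernel of the bounded operator sending $w$ to $w$ minus its average over $\partial\Omega\times\{z\}$. As every element of the dense subspace $H$ has this property, so does $u$; thus $u|_{\partial\Omega\times[0,h]}=c(z)$ for some function of $z$ alone, pointwise a.e.

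For $(E4)$ I take $\phi=\phi(z)$ and again use $B(u,\phi)=F(\phi)$. Here $B(u,\phi)=\int_{\Omega\times[0,h]}\lambda\,\partial_z u\,\phi'$; integrating by parts in $z$, and assuming $\phi$ vanishes to first order at $z=0,h$ (e.g.\ $\phi\in C^\infty_c((0,h))$) so the endpoint terms drop, yields $B(u,\phi)=-\int_{\Omega\times[0,h]}u\,\partial_z(\lambda\partial_z\phi)$, while $F(\phi)=-\int f\phi+\langle j,\phi\rangle$ (the top/bottom term $\int_{\Omega\times\{0,h\}}\lambda g\phi$ vanishing since $\phi(0)=\phi(h)=0$). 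Combining with $(E1)$ gives $\int_{\Omega\times[0,h]}\mathcal L(u)\phi-u\,\partial_z(\lambda\partial_z\phi)=\langle j,\phi\rangle$, which is $(E4)$ in the sense of \cref{weakbdrycond2}. The routine parts are the Lax--Milgram estimates and the integrations by parts; the one place that needs care is the bookkeeping for $(E2)$ and $(E4)$ -- tracking exactly which boundary terms survive and checking that the leftover contributions are precisely the ones killed either by the compatibility condition (via the $\bar\phi$ cancellation) or by the endpoint-vanishing of the test function; the identification of the conormal derivative of $\tilde{\nabla}$ with $\lambda\partial_\nu$ on the horizontal faces, and the factoring out of the compatibility defect from $F(\gamma)$, are the two small observations that make it close.
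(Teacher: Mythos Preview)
Your proof is correct and follows essentially the same approach as the paper: Lax--Milgram for existence and the bound, then the observation that under compatibility the identity $B(u,\phi)=F(\phi)$ extends from mean-zero test functions to all smooth $\phi$ constant on lateral slices (the paper phrases this as ``constant functions $\gamma$ can be used in the weak formulation''), after which (1)--(4) follow by specializing $\phi$. Your treatment is more explicit than the paper's---particularly the $\bar\phi$ cancellation, the closed-subspace argument for (E3), and the bookkeeping for (E2) and (E4)---and your restriction in (E4) to $\phi\in C_c^\infty((0,h))$ is appropriate for the integration by parts and consistent with \cref{weakbdrycond2}.
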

\begin{proof}
The first claim is simply the above construction of $u$ as the solution to the variational problem $(V)$. For (1)-(4), the compatibility condition implies that constant functions $\gamma$ can be used in the weak formulation, and therefore any $C^\infty$ test function such that $\gamma(x,y,z)|_{\partial\Omega\times[0,h]}=c(z)$ is valid in the weak formulation. Parts (1) and (2) then follow from considering test functions which vanish on the lateral boundary $\partial\Omega\times[0,h]$.  Part (3) is a consequence of constructing the solution within $\Hilbert$.  Finally, (4) follows from noticing that when $\phi$ depends only on $z$, 
$$ B(u, \phi) = \int_{\Omega\times[0,h]} \lambda \partial_z u \partial_z \phi. $$
Rearranging the equality $B(u,\phi) = F(\phi)$ and using (1) finishes the proof.
\end{proof}

\subsection{Higher Regularity}
In order to build weak solutions, the operator which sends a triple  $(f,g,j)$ to the solution of the variational problem $(V)$ must map compactly into $\Hilbert$.  This will be achieved by proving an elliptic regularity theorem which asserts that the solution has strictly more than one derivative in $L^2(\Omega\times[0,h])$.  The proof is split up into four preliminary lemmas which correspond to isolating the effects of the compatibility condition, $g$, $f$, and $j$ on the regularity of the solution. Specifying a triple of data which does not satisfy \cref{compatibility} produces a solution by projecting, in an appropriate sense, the data onto the set of compatible data. Analysis of the effect $g$ is direct because solutions to the extension problem on bounded domains $\Omega$ can be written down explicitly.  Once the Neumann derivative has been removed, we analyze the effects of $f$ and $j$ by reflecting the solution over the boundaries $z=0,h$ and utilizing the standard difference quotient technique for elliptic regularity. Each step is proved for the special case $\lambda(z)\equiv 1$, i.e. when $\mathcal{L}=\Delta$. The four lemmas are combined in the proof of the following theorem, where we then provide a description of how to adapt the techniques to general smooth $\lambda$. 
\begin{theorem}\label{superduperelliptic}
Let $f\in L^2(\Omega\times[0,h])$, $g\in L^2(\Omega\times\{0,h\})$, and $j\in L^2([0,h])$.  Let $u\in \Hilbert$ be the unique variational solution to $(V)$ guaranteed by \cref{elliptic}.  Then 
$$ \|\nabla u\|_{H^\frac{1}{2}(\Omega\times[0,h])} \leq C(\Omega,h,\lambda)\left( \|f\|_{L^2(\Omega\times[0,h])} + \|g\|_{L^2(\Omega\times\{0,h\})}+\|j\|_{L^2([0,h])} \right).$$ 
\end{theorem}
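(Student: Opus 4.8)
The plan is to prove four lemmas---treating in turn the effect of the compatibility relation, of $g$, of $f$, and of $j$ on the regularity of the variational solution---and then combine them; throughout I would first take $\lambda\equiv 1$, so $\mathcal{L}=\Delta$, and address general smooth $\lambda$ at the end. The easiest step dispenses with compatibility: by \cref{elliptic} the solution $u$ of $(V)$ depends on $(f,g,j)$ only through the functional $F$, and since every element of $\Hilbert$ has zero mean on $\Omega\times[0,h]$, replacing $f$ by $f-\tfrac{1}{|\Omega\times[0,h]|}\bigl(\int f-\int_0^h j-\int_{\Omega\times\{0,h\}}\lambda g\bigr)$ changes neither $F$ nor $u$ but yields a triple satisfying \cref{compatibility} whose norms are $\lesssim\|f\|_{L^2}+\|g\|_{L^2}+\|j\|_{L^2}$. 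Hence I may assume $(f,g,j)$ compatible, so that by \cref{elliptic} $u$ solves $(E1)$--$(E4)$ in the appropriate weak senses.

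Next I would remove the Neumann datum. Expanding $g$ in eigenfunctions of the Dirichlet Laplacian on $\Omega$ reduces the construction of an extension to an explicitly solvable family of second-order ODEs in $z$; summing yields $v$ with $\partial_\nu v=g$ on $\Omega\times\{0,h\}$, with $v=0$ on $\partial\Omega\times[0,h]$, and---interpolating the endpoint bounds $\|v\|_{H^1}\lesssim\|g\|_{(H^{1/2})^*}$ and $\|v\|_{H^2}\lesssim\|g\|_{H^{1/2}}$ that the ODE analysis gives---with $\|\nabla v\|_{H^{1/2}(\Omega\times[0,h])}\lesssim\|g\|_{L^2}$. Then $w:=u-v\in\Hilbert$ solves $(E)$ with data $\bigl(f-\Delta v,\,0,\,j-\int_{\partial\Omega\times\{z\}}\grad v\cdot\nu_s\bigr)$, of controlled norm, and after applying the compatibility reduction a second time I take this triple compatible. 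Since now $\partial_\nu w=0$ on both caps, the even reflection of $w$ across $z=0$ and $z=h$ is a weak solution of $\Delta w=\tilde f$ on the \emph{smooth} manifold $\Omega\times\mathbb{T}$ (with $\mathbb{T}$ of circumference $2h$ and $\tilde f,\tilde j$ the even extensions), still with $w=c(z)$ on $\partial\Omega\times\mathbb{T}$ and $\int_{\partial\Omega\times\{z\}}\grad w\cdot\nu_s=\tilde j(z)$. The edges $\partial\Omega\times\{0,h\}$ where the Neumann and lateral conditions met have been removed.

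On $\Omega\times\mathbb{T}$ I would run the difference-quotient method only in directions compatible with the boundary conditions: a shift in $z$ sends ``$w=c(z)$'' to ``$w=c(z+k)$'' and $(E4)$ to the same condition with $\tilde j(\cdot+k)$, while flattened difference quotients tangential to $\partial\Omega$ leave $\tilde j$ (which is constant in the tangential direction) and the structure of $(E3)$--$(E4)$ untouched. Splitting $w=w_f+w_j$ along the data $(\tilde f,0)$ and $(0,\tilde j)$: for $w_f$, tangential and $z$-difference quotients together with the equation (to recover the last, doubly-normal, second derivative) give $w_f\in H^2(\Omega\times\mathbb{T})$ with norm $\lesssim\|\tilde f\|_{L^2}$---no corner obstruction survives once the $\pi/2$ Dirichlet--Neumann edges are reflected away. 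For $w_j$, the tangential difference quotients are harmless (as $\tilde j$ is tangentially constant), but the $z$-difference quotient replaces $\tilde j$ by its difference quotient, bounded in $H^{-1/2}=(H^{1/2})^*$ uniformly only when $\tilde j\in H^{1/2}$; so I would establish the two endpoints $\tilde j\in H^{-1/2}\Rightarrow w_j\in\Hilbert$ (this is \cref{elliptic}) and $\tilde j\in H^{1/2}\Rightarrow w_j\in\Hilbert\cap H^2(\Omega\times\mathbb{T})$, and interpolate. Since $L^2(\mathbb{T})=[H^{-1/2},H^{1/2}]_{1/2}$ and $[\Hilbert,\,\Hilbert\cap H^2]_{1/2}$ coincides---up to the closed linear constraints (zero mean; trace a function of $z$), which pass to interpolation spaces---with $H^{3/2}$ subject to those constraints, the datum $\tilde j\in L^2$ gives $w_j\in H^{3/2}$, i.e. $\grad w_j\in H^{1/2}$. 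Adding the contributions of $v$, $w_f$, $w_j$ and restricting to $\Omega\times[0,h]$ yields the claimed estimate.

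The main obstacle is the lateral boundary condition itself: since $(E3)$--$(E4)$ is neither Dirichlet nor Neumann, textbook boundary regularity does not apply, and two points need care---verifying that the admissible difference quotients truly preserve the weak forms of $(E3)$ and $(E4)$, so the a priori estimates close inside $\Hilbert$, and pinpointing where the half-derivative is lost. That loss is intrinsic, forced by $(E4)$ and by the presence of $g$: the condition $u=c(z)$ annihilates every boundary-tangential Fourier mode of the lateral trace, so the part of the solution that remains behaves like a two-dimensional Neumann problem in the (normal,$z$) variables, for which $L^2$ boundary data produce exactly $H^{3/2}$; the interpolation above is precisely the tool that captures this sharp exponent, and it is why one cannot expect $\grad u\in H^{s}$ for any $s>\tfrac12$. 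Finally, for general smooth $\lambda$ one reflects $\lambda$ to an even (Lipschitz, uniformly elliptic) coefficient, for which the even reflection of $w$ across the caps is still a weak solution; the ODE family used to remove $g$ remains explicitly solvable, and in every difference-quotient estimate the $z$-dependence of $\lambda$ yields only lower-order commutators, which are absorbed.
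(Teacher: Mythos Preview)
Your proposal is correct and follows the same four-step architecture as the paper: reduce to compatible data, peel off $g$ via the Dirichlet-eigenfunction extension with interpolation, then handle $f$ and $j$ separately by even reflection across the caps plus difference quotients in $z$, with the $j$-estimate obtained by interpolating between the $H^{-1/2}$ endpoint (which is \cref{elliptic}) and an $H^{1/2}$ endpoint proved by hand. The implementation differs only cosmetically: you reflect all the way to $\Omega\times\mathbb{T}$ rather than to $\Omega\times[-h,h]$ with a smooth cutoff in $z$; for the $f$-piece you also run tangential (in $\partial\Omega$) difference quotients and recover the doubly-normal derivative from the equation, whereas the paper uses only $z$-difference quotients and then gets the remaining $(x,y)$-regularity by applying two-dimensional Dirichlet elliptic regularity slice by slice in $z$---this avoids flattening $\partial\Omega$ and checking that the flattened versions of $(E3)$--$(E4)$ are preserved; and for general $\lambda$ you absorb the lower-order commutators coming from the reflected (merely Lipschitz) coefficient, while the paper localizes near each cap and straightens $\lambda$ by a change of variable $z'\mapsto\theta(z')$ solving $\theta'=\sqrt{\lambda\circ\theta}$. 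One small point to tighten: the identification $[\Hilbert,\Hilbert\cap H^2]_{1/2}=H^{3/2}\cap\{\text{mean zero, trace}=c(z)\}$ is not automatic, since real interpolation does not in general commute with intersection by a closed subspace; the paper sidesteps this by interpolating the linear \emph{operator} $j\mapsto\nabla w_j$ between $H^{-1/2}\to L^2$ and $H^{1/2}\to H^1$, which your two endpoint bounds already give you.
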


Before beginning the analysis, we set several notations.  Let $\{e_n\}_{n=1}^\infty$ and $\{\lambda_n\}_{n=1}^\infty$ be the sequence of eigenfunctions and corresponding eigenvalues for the operator $-\lap$ on $\Omega$ with homogenous Dirichlet boundary conditions; that is, 
\[
\begin{dcases}
       -\lap e_n = \lambda_n e_n &  (x,y)\in\Omega \\
       e_n = 0 & (x,y)\in\partial\Omega. \\
       \end{dcases}
\]
For $s\geq 0$, define
$$ \bar{H}^s(\Omega) = \{ g=\sum_n g_n e_n \in L^2(\Omega): \sum_n \left(\lambdan\right)^s g_n e_n \in L^2(\Omega) \}.$$
By duality, we have that $$\left(\bar{H}^s(\Omega)\right)^* \cong \{ \{g_n\}_{n=1}^\infty\subset \mathbb{R}: \sum_n \frac{1}{\left(\lambdan\right)^{2s}} g_n^2 < \infty \}.$$
Real interpolation of Hilbert spaces $H_1, H_2$ is defined in the classical way (following the book of Bergh and Lofstrom for example \cite{berghlofstrom}).
For non-integer $s\in (-\infty, \infty)$, the Stein-Weiss interpolation theorem (see for example the book of Bergh and Lofstrom \cite{berghlofstrom}) gives that 
$$ [\bar{H}^{s_1}(\Omega), \bar{H}^{s_2}(\Omega)]_\theta = \bar{H}^s $$
for $s=\theta s_1 + (1-\theta)s_2$ where $s_1,s_2\in \mathbb{Z}$. When $s=0$, $\bar{H}^s(\Omega)$ coincides with $L^2(\Omega)$.  In general, $\bar{H}^s(\Omega)\subset H^s(\Omega)$ if $H^s(\Omega)$ is defined classically (see for example Constantin and Nguyen \cite{cn2}).

For $s\in(0,1)$, the fractional Sobolev spaces $H^s(\Omega\times[0,h])$ are defined by 
$$ H^s(\Omega\times[0,h]):= \left\lbrace h \in L^2(\Omega\times[0,h]): \frac{|h(x_1)-h(x_2)|}{|x_1-x_2|^{\frac{3}{2}+s}} \in L^2\left((\Omega\times[0,h])\times(\Omega\times[0,h])\right) \right\rbrace. $$
For $s\in \mathbb{N}+(0,1)$, $H^s(\Omega\times[0,h])$ is the subset of $L^2(\Omega\times[0,h])$ for which $$ \frac{|\nabla^{\left \lfloor{s}\right \rfloor }( h(x_1)-h(x_2))|}{|x_1-x_2|^{\frac{3}{2}+s-\left \lfloor{s}\right \rfloor}} \in L^2\left((\Omega\times[0,h])\times(\Omega\times[0,h])\right). $$
Classical interpolation results (see for example the work of Triebel \cite{triebelbook}, \cite{triebelpaper}) give that 
$$ [{H}^{s_1}(\Omega\times[0,h]), {H}^{s_2}(\Omega\times[0,h])]_\theta = H^{s}(\Omega\times[0,h])$$
for $s=\theta s_1 + (1-\theta)s_2$.

\begin{lemma}[Effect of the Compatibility Condition]
Let a triple $(f,g,j)$ with $f\in L^2(\Omega\times[0,h])$, $g\in L^2(\Omega\times\{0,h\})$, $j\in L^2(0,h)$ be given. Let $u$ be the solution to the variational problem with data $(f,g,j)$. Then there exists a constant $c$ depending only on 
$$\int_{\Omega\times[0,h]} f,\quad \int_{\Omega\times\{0,h\}} g,\quad \int_0^h j$$
such that $\Delta u = f + c$ and 
$$ |c| \lesssim \|f\|_{L^2} + \|g\|_{L^2} + \|j\|_{L^2}. $$
\end{lemma}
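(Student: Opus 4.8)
The plan is to reduce to the compatible case already settled in \cref{elliptic}. Since this lemma concerns the case $\mathcal{L}=\Delta$, set
$$ c := \frac{1}{|\Omega|\,h}\left( \int_0^h j(z)\,dz + \int_{\Omega\times\{0,h\}} g\,dx\,dy - \int_{\Omega\times[0,h]} f\,dx\,dy\,dz \right), $$
a quantity depending only on the three prescribed averages. The first step is to check that the perturbed triple $(f+c,\,g,\,j)$ satisfies the compatibility condition of \cref{compatibility}: with $\lambda\equiv 1$ this is precisely the identity $\int_{\Omega\times[0,h]}(f+c) = \int_{\Omega\times[0,h]}f + c\,|\Omega|\,h = \int_0^h j + \int_{\Omega\times\{0,h\}} g$, which holds by the very choice of $c$.

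Next I would observe that the variational solution is insensitive to this perturbation of $f$. For every $\gamma\in\Hilbert$ one has $\int_{\Omega\times[0,h]}\gamma=0$ by definition of $\Hilbert$, so the functional $F$ built from $(f+c,g,j)$ differs from the one built from $(f,g,j)$ only by the term $-\,c\int_{\Omega\times[0,h]}\gamma=0$; hence $(V)$ has the same right-hand side for both data, and therefore the same solution $u$. Since $(f+c,g,j)$ is now compatible, part (1) of \cref{elliptic} gives $\Delta u=f+c$ in the weak sense, which is the first assertion. The estimate on $|c|$ is then immediate from the Cauchy--Schwarz inequality: $\left|\int_{\Omega\times[0,h]}f\right|\le (|\Omega|\,h)^{1/2}\|f\|_{L^2(\Omega\times[0,h])}$, $\left|\int_{\Omega\times\{0,h\}}g\right|\le (2|\Omega|)^{1/2}\|g\|_{L^2(\Omega\times\{0,h\})}$, and $\left|\int_0^h j\right|\le h^{1/2}\|j\|_{L^2(0,h)}$, so that $|c|\le C(\Omega,h)\big(\|f\|_{L^2}+\|g\|_{L^2}+\|j\|_{L^2}\big)$.

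There is no serious obstacle here; the only point requiring care is the bookkeeping in the middle step, namely that shifting $f$ by the constant $c$ is invisible to the variational problem (precisely because $\Hilbert$ consists of mean-zero functions) while this is exactly the shift that restores compatibility. If one prefers an argument not routed through \cref{elliptic}, the same $c$ can be extracted directly: for $\phi\in C_c^\infty(\Omega\times(0,h))$ the function $\phi-\bar\phi$, where $\bar\phi$ is the mean of $\phi$, lies in $H$ since its lateral trace is the constant $-\bar\phi$; testing $(V)$ against $\phi-\bar\phi$, integrating by parts, and using that $\phi$ vanishes on all of $\partial(\Omega\times[0,h])$ yields $\langle\Delta u,\phi\rangle=\int_{\Omega\times[0,h]}(f+c)\phi$ with the same constant, after which the bound is obtained as above.
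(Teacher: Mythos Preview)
Your proof is correct and in fact cleaner than the paper's, but the route is genuinely different. The paper argues abstractly: it first shows that $\Delta u-f$ is a constant by testing $(V)$ against differences of approximate identities centered at two interior points, then defines $A(f,g,j)$ to be this constant, observes that $A$ is linear and vanishes on mean-zero data (since compatibility then holds and \cref{elliptic} gives $\Delta u=f$), and concludes that $A$ factors through the three averages as a linear map $\mathbb{R}^3\to\mathbb{R}$, hence is bounded. You instead write down $c$ explicitly as the normalized compatibility defect, note that adding a constant to $f$ is invisible in $(V)$ because test functions in $\Hilbert$ have mean zero, and then invoke the compatible case of \cref{elliptic} directly to get $\Delta u=f+c$.

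What this buys: your argument gives the exact value of $c$ and an explicit constant $C(\Omega,h)$ in the bound, and avoids the somewhat delicate approximate-identity step; the paper's version, by contrast, establishes that $\Delta u-f$ is constant without first guessing what the constant should be, which is conceptually self-contained but less informative. Your alternative derivation at the end (testing against $\phi-\bar\phi$) is also correct and is essentially a sharpened, single-shot variant of the paper's two-point test.
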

\begin{proof}
We define an operator $A: L^2(\Omega\times[0,h])\times L^2(\Omega\times\{0,h\})\times L^2(0,h) \rightarrow \mathbb{R} $ which maps a triple $(f,g,j)$ to a constant $c=A(f,g,j)$.  Since $\Hilbert$ only contains test functions with mean value zero, given $(x_1,y_1,z_1), (x_2,y_2,z_2) \in \Omega\times[0,h]$, choose a sequence of test functions which is the difference between two sequences of approximate identities centered at $(x_1,y_1,z_1), (x_2,y_2,z_2)$. Using this sequence of test functions in the variational formulation gives that $\Delta u(x_1,y_1,z_1)-\Delta u (x_2,y_2,z_2) = f(x_1,y_1,z_1)-f(x_2,y_2,z_2)$. Therefore, $\Delta u$ is equal to $f$ up to a constant $c$, and thus $A(f,g,j)=c$ is well-defined.  By the linearity of the variational problem, $A$ is linear.  To show that $A$ depends only on the integrals of $f$, $g$, and $j$, let $\bar{f}, \bar{g}, \bar{j}$ be given, each with mean value zero. Then $A(\bar{f},\bar{g},\bar{j})$ satisfies the compatibility condition, implying $\Delta \bar{u} = \bar{f}$ in a weak sense, and $A(\bar{f},\bar{g},\bar{j})=0$. Therefore $A$ depends only on 
$$\int_{\Omega\times[0,h]} f,\quad \int_{\Omega\times\{0,h\}} g,\quad \int_0^h j.$$
Now $A$ is a linear map from $\mathbb{R}^3\rightarrow \mathbb{R}$, and is therefore bounded.  That is, 
$$ |A(f,g,j)|^2 \lesssim \big{|}\int_{\Omega\times[0,h]} f\big{|}^2 + \big{|} \int_{\Omega\times\{0,h\}} g\big{|}^2 + \big{|} \int_0^h j\big{|}^2.$$
Applying H\"{o}lder's inequality finishes the proof.
\end{proof}

\begin{lemma}[Effect of $g$]\label{effectofg}
Consider the equation
\[
\begin{dcases}
       \Delta u = 0 &  \Omega\times[0,h] \\
       \partial_{\nu} u = g & \Omega \times \{0,h\}\\
       u = 0 \qquad & \partial\Omega \times [0,h].\\
       \end{dcases}
\]
for $g\in\bar{H}^s(\Omega\times\{0,h\})$, $s\geq -\frac{1}{2}$. Then there exists a solution $u$ which satisfies
$$ \| \nabla u \|_{H^{s+\frac{1}{2}}(\Omega\times[0,h])}  \leq C(\Omega,h) \|g\|_{\bar{H}^s(\Omega\times\{0,h\})}  $$
\end{lemma}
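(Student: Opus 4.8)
The plan is to build $u$ explicitly by separating variables in the Dirichlet eigenbasis $\{e_n\}$ of $-\lap$ on $\Omega$, and then to reduce the claimed regularity to elementary one‑dimensional estimates. Writing $u=\sum_n u_n(z)\,e_n(x,y)$, the lateral condition $u|_{\partial\Omega\times[0,h]}=0$ is automatically encoded, while $\Delta u=0$ decouples into the family of ordinary differential equations $u_n''=\lambda_n u_n$ on $[0,h]$. Expanding $g$ on each face as $g|_{z=0}=\sum_n g_n^0 e_n$ and $g|_{z=h}=\sum_n g_n^h e_n$, the Neumann conditions become $-u_n'(0)=g_n^0$ and $u_n'(h)=g_n^h$. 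Since $\lambda_n\ge\lambda_1>0$ there is no zero mode, so each scalar problem is uniquely solvable; by linearity and the reflection $z\mapsto h-z$ it suffices to treat the case $g_n^h=0$, for which, with $\mu_n:=\sqrt{\lambda_n}$,
$$ u_n(z)=\frac{g_n^0}{\mu_n\,\sinh(\mu_n h)}\,\cosh\!\big(\mu_n(h-z)\big). $$

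The second step is a family of uniform bounds on these profiles. Using $\sinh x\ge x$, the monotonicity of $\coth$ on $(0,\infty)$, and $\int_0^h\cosh^2(\mu w)\,dw=\tfrac h2+\tfrac{\sinh(2\mu h)}{4\mu}$, a direct computation gives, for every integer $k\ge0$ and uniformly in $n$,
$$ \int_0^h\big|\partial_z^k u_n(z)\big|^2\,dz\ \le\ C(\Omega,h)\,\mu_n^{2k-3}\,|g_n^0|^2; $$
the mechanism is that $\cosh(\mu_n(h-z))/\sinh(\mu_n h)\lesssim e^{-\mu_n z}$ for $\mu_n$ large, so integration in $z$ produces the extra factor $\mu_n^{-1}$, while for $\mu_n$ near $\sqrt{\lambda_1}$ everything is controlled because $\sinh(\mu_n h)$ is bounded away from $0$. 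Multiplying by $\mu_n^{2t}$, summing in $n$, and using orthonormality of $\{e_n\}$ in $L^2(\Omega)$ together with the definition of the spectral scale $\bar H^\bullet(\Omega)$, one obtains for $t\ge0$
$$ \int_0^h\|u(\cdot,z)\|_{\bar H^{t}(\Omega)}^2\,dz\ +\ \int_\Omega\|u(x,y,\cdot)\|_{H^{t}(0,h)}^2\,dx\,dy\ \le\ C(\Omega,h)\sum_n\mu_n^{2t-3}|g_n^0|^2, $$
where the vertical term equals $\sum_n\|u_n\|_{H^t(0,h)}^2$ by orthonormality and is estimated from the displayed $\partial_z^k$ bound via real interpolation between the integer exponents $\lfloor t\rfloor$ and $\lceil t\rceil$. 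Taking $t=s+\tfrac32$ and restoring the reflected contribution of $g|_{z=h}$, the right-hand side becomes $C(\Omega,h)\|g\|_{\bar H^s(\Omega\times\{0,h\})}^2$; since $s\ge-\tfrac12$ all series converge, which in particular yields $u\in H^1(\Omega\times[0,h])$ and hence legitimacy of the boundary conditions.

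The last step assembles the horizontal and vertical information on the cylinder. As $\Omega\times[0,h]$ is a Lipschitz, hence Sobolev‑extension, domain, the standard anisotropic characterization gives
$$ \|u\|_{H^{s+3/2}(\Omega\times[0,h])}^2\ \approx\ \int_0^h\|u(\cdot,z)\|_{H^{s+3/2}(\Omega)}^2\,dz\ +\ \int_\Omega\|u(x,y,\cdot)\|_{H^{s+3/2}(0,h)}^2\,dx\,dy. $$
Bounding the classical horizontal norm by the spectral one through the continuous inclusion $\bar H^{s+3/2}(\Omega)\hookrightarrow H^{s+3/2}(\Omega)$ and inserting the estimates of the previous step yields $\|u\|_{H^{s+3/2}(\Omega\times[0,h])}\le C(\Omega,h)\|g\|_{\bar H^s(\Omega\times\{0,h\})}$, and therefore $\|\nabla u\|_{H^{s+1/2}(\Omega\times[0,h])}\le C(\Omega,h)\|g\|_{\bar H^s(\Omega\times\{0,h\})}$. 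The subtle point is organizational rather than computational: differentiating $u$ in $x$ or $y$ destroys its vanishing on $\partial\Omega$, so $\nabla_{xy}u$ cannot be expanded in $\{e_n\}$; the remedy is to prove all of the regularity on $u$ itself inside the spectral scale — where the inclusion $\bar H^\bullet\hookrightarrow H^\bullet$ runs in the favorable direction — and only at the very end to differentiate once. The other place where care is required is maintaining uniformity of every constant down to the bottom of the spectrum, which is precisely where the absence of a zero mode (equivalently, $\sinh(\sqrt{\lambda_n}h)$ bounded below) enters.
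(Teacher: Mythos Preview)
Your argument is correct. Both you and the paper build the solution explicitly via the Dirichlet eigenbasis $\{e_n\}$ and the resulting $\cosh$ profiles, and both ultimately pass through the inclusion $\bar H^t(\Omega)\hookrightarrow H^t(\Omega)$ together with interpolation; the routes to the regularity estimate differ in execution. The paper decomposes each $\cosh$ into exponentials, thereby reducing to a half-space extension problem and invoking the integration-by-parts identity $\int|\nabla v|^2=\int_{\{z=h\}}(\partial_\nu v)\,v$ for harmonic $v$; this yields the bound in the spectral scale $\bar H^{s+1/2}$, and the special feature $\partial_z\sim(-\lap)^{1/2}$ of harmonic extensions then delivers full (including mixed) regularity at integer levels before interpolating the operator. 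You instead stay on the finite cylinder, estimate the one-dimensional profiles $\int_0^h|\partial_z^k u_n|^2$ directly via elementary $\cosh/\sinh$ inequalities, and assemble horizontal and vertical regularity through the anisotropic characterization $H^t(\Omega\times I)\simeq L^2\bigl(I;H^t(\Omega)\bigr)\cap H^t\bigl(I;L^2(\Omega)\bigr)$ of Sobolev spaces on product domains. Your route avoids the half-space detour and keeps the computations elementary; the trade-off is that the anisotropic characterization---which is where control of the mixed derivatives $\partial_z\grad u$ is hidden---is a nontrivial (though standard) external ingredient, whereas the paper's argument obtains the mixed derivatives essentially for free from the harmonic relation $\partial_z u\approx(-\lap)^{1/2}u$. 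Your closing remark about proving everything for $u$ in the spectral scale and differentiating only at the end is a genuinely cleaner bookkeeping choice than what the paper does.
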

\begin{proof}
We begin by assuming that $g$ is smooth so that all calculations with higher derivatives are valid.  For arbitrary $g \in \bar{H}^s$, the claim follows from density of smooth functions. By assumption on $g$, there exist sequences of real numbers $\{ t_n \} , \{ b_n \}$ such that 
$$ g(0,x,y) = \sum_n b_n e_n(x,y), \qquad  g(h,x,y) = \sum_n t_n e_n(x,y) .$$
Define $$ {u} = \sum_n \left\lbrace \frac{t_n}{\lambdan} \frac{\cosh(z\lambdan)}{\sinh(h\lambdan)} + \frac{b_n}{\lambdan}\frac{\cosh((z-h)\lambdan)}{\sinh(h\lambdan)} \right\rbrace e_n(x,y). $$
Using that $\sinh(0) = 0$ and $(\sinh)''=(\cosh)'=\sinh$, 
we have that 
$$ -\frac{\partial}{\partial z} {u} |_{z=0} = g|_{z=0} $$ and 
$$ \frac{\partial}{\partial z} {u} |_{z=h} = g|_{z=h}. $$
In addition, it is immediate that
$$ \partial_{zz} {u} = -\lap u, $$
and therefore $\Delta {u} \equiv 0$.
Since $$\cosh(z\lambdan) \approx \sinh(h\lambdan) \approx e^{z\lambdan}$$
as $n\rightarrow \infty$, we have that 
$$ \half u \approx \partial_z u \in L^\infty([0,h];L^2(\Omega)) \subset L^2(\Omega\times[0,h]).  $$
Using the well-known fact that $\bar{H}^1(\Omega)=H_0^1(\Omega)$, we have that 
$$ \| \nabla u \|_{L^2(\Omega\times[0,h])} \leq C(\Omega,h) \| g \|_{L^2(\Omega\times\{0,h\})},  $$
and thus $u$ is a well-defined function in $\Omega\times[0,h]$ which solves the desired equation.

To sharpen this bound and obtain higher regularity estimates, we split the sum into four pieces corresponding to the four pieces of 
$$ \cosh(z\lambdan) = \frac{e^{z\lambdan}+e^{-z\lambdan}}{2}, \qquad \cosh((z-h)\lambdan) = \frac{e^{(z-h)\lambdan}+e^{-(z-h)\lambdan}}{2}. $$
Define 
$$ \tilde{t}_n := \frac{t_n}{\sinh(h\lambdan)} e^{h\lambdan}, \qquad  \tilde{g}:= \sum_n \tilde{t}_n e_n $$
so that $$ \|\tilde{g}\|_{\bar{H}^s(\Omega)} \leq C(\Omega) \| g \|_{\bar{H}^s(\Omega\times\{0,h\})} .$$
Then
$$ \tilde{u} := \sum_n e^{(z-h)\lambdan} \frac{\tilde{t}_n}{\lambdan} $$ 
is the solution to 
\[
\begin{dcases}
       \Delta \tilde{u} = 0 &  \Omega\times(-\infty,h] \\
       \partial_{\nu} \tilde{u} = \tilde{g} & \Omega \times \{h\}\\
       \tilde{u}(x,y,z) = 0 \qquad & \partial\Omega \times (-\infty,h].
       \end{dcases}
\]
Now we can write that 
\begin{align}
\| \nabla \tilde{u} \|_{\bar{H}^{s+\frac{1}{2}}(\Omega\times(-\infty,h))}   &= \int_{\Omega\times[0,h]} \nabla \left( (-\lap)^{\frac{1}{2}(s+\frac{1}{2})} \tilde{u} \right) \cdot\nabla\left( (-\lap)^{\frac{1}{2}(s+\frac{1}{2})} \tilde{u} \right) \nonumber\\
&= \int_{\Omega\times\{h\}} \partial_\nu (-\lap)^{\frac{1}{2}(s+\frac{1}{2})} \tilde{u} (-\lap)^{\frac{1}{2}(s+\frac{1}{2})} \tilde{u}\nonumber \\
&= \|\partial_\nu \tilde{u}\|_{\bar{H}^{s}(\Omega)}\nonumber\\
&\leq C(\Omega,h)\| g \|_{\bar{H}^{s}(\Omega\times\{0,h\})} 
\end{align}
Arguing in a similar fashion for the other parts of the infinite sum, we conclude that 
$$ \| \nabla u \|_{\bar{H}^{s+\frac{1}{2}}(\Omega\times[0,h])} \leq C(\Omega,h) \| g \|_{\bar{H}^{s}(\Omega\times\{0,h\})}. $$
If $s+\frac{1}{2}\in \mathbb{N}$, noticing that $(\partial_z)^{s+\frac{1}{2}} u \approx (-\lap)^\frac{s+\frac{1}{2}}{2} u$, we have that 
\begin{align}\label{interpolationbound}
\|\nabla u \|_{{H}^{s+\frac{1}{2}}(\Omega\times[0,h])} \leq C(\Omega,h) \| g \|_{\bar{H}^{s}(\Omega\times\{0,h\})}.
\end{align}

As noted above, for non-integer $s\in (-\frac{1}{2}, \infty)$, the Stein-Weiss interpolation theorem gives that 
$$ [\bar{H}^{s_1}(\Omega), \bar{H}^{s_2}(\Omega)]_\theta = \bar{H}^s(\Omega) $$
for $s=\theta s_1 + (1-\theta)s_2$, and interpolation of Hilbert-Sobolev spaces on Lipschitz domains gives that 
$$ [{H}^{s_1+\frac{1}{2}}(\Omega\times[0,h]), {H}^{s_2+\frac{1}{2}}(\Omega\times[0,h])]_\theta = H^{s+\frac{1}{2}}(\Omega\times[0,h]).$$  Interpolation of \eqref{interpolationbound} then concludes the proof of the lemma.
\end{proof}

In the following two lemmas, we address the effects of $f$ and $j$. While the solutions we consider are only variational a priori, for the sake of clarity we write each PDE using classical notation rather than the variational form.

\begin{lemma}[Effect of $f$]\label{effectoff}
Let $u\in\Hilbert$ be a variational solution to 
\[
\begin{dcases}
       \Delta u = f &  \Omega\times[0,h] \\
       \partial_{\nu} u = 0 & \Omega \times \{0,h\}\\
       u(x,y,z) = c(z) \qquad & \partial\Omega \times [0,h]\\
       \int_{\partial\Omega\times\{z\}}  \grad u\cdot \nu_s =0 & [0,h].
       \end{dcases}
\]
for data $f\in L^2(\Omega\times[0,h])$.  Then 
$$ \| u \|_{H^2(\Omega\times[0,h])} \leq C(\Omega,h) \| f\|_{L^2(\Omega\times[0,h])}. $$
\end{lemma}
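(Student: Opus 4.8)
The plan is to strip off the non-smooth top and bottom faces by an even reflection in $z$, reducing to an elliptic regularity problem on a cylinder with \emph{smooth} lateral boundary, and then to run the standard difference-quotient argument; the only nonstandard point is keeping the difference-quotient test functions inside the constrained space $\Hilbert$.

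\emph{Step 1: reflection.} Since $\partial_\nu u = 0$ on $\Omega\times\{0,h\}$, the even extension of $u$ across $z=0$ and then across $z=h$ is a $2h$-periodic function $\tilde u$ on $\Omega\times\mathbb{T}_{2h}$, $\mathbb{T}_{2h}:=\mathbb{R}/(2h\mathbb{Z})$, which weakly solves $\Delta\tilde u=\tilde f$ with $\tilde f$ the corresponding even periodic extension of $f$. This is the standard fact that even reflection across a flat Neumann face preserves the weak formulation, and it is the only place the hypothesis $\partial_\nu u=0$ enters. The cylinder $\Omega\times\mathbb{T}_{2h}$ has smooth boundary $\partial\Omega\times\mathbb{T}_{2h}$ — the former edges $\partial\Omega\times\{0,h\}$ are now ordinary boundary points — $\tilde u$ still has mean zero, and its trace on the lateral boundary is the even periodic extension $\tilde c(z)$ of $c$, so $\tilde u$ is constant on each lateral slice $\partial\Omega\times\{z_0\}$. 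By \cref{elliptic} with $g=j=0$ and Poincar\'e we have $\|\tilde u\|_{H^1(\Omega\times\mathbb{T}_{2h})}\lesssim\|\tilde f\|_{L^2}\lesssim\|f\|_{L^2(\Omega\times[0,h])}$, and the variational identity reads $\int\nabla\tilde u\cdot\nabla\gamma=-\int\tilde f\gamma$ for all $\gamma\in\Hilbert$; it therefore suffices to prove $\|\tilde u\|_{H^2(\Omega\times\mathbb{T}_{2h})}\lesssim\|\tilde f\|_{L^2}+\|\tilde u\|_{H^1}$ and restrict back to $\Omega\times[0,h]$.

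\emph{Step 2: the $z$-direction and the interior.} The $z$-derivatives come for free from periodicity. The difference quotient $D^s_z\tilde u:=s^{-1}(\tilde u(\cdot,\cdot,\cdot+s)-\tilde u)$ has mean zero and is constant on each lateral slice (there $\tilde u=\tilde c(z)$), so $D^s_z\tilde u$ and $D^s_{-z}D^s_z\tilde u$ both lie in $\Hilbert$; testing against $D^s_{-z}D^s_z\tilde u$, summing by parts in $z$, and bounding $|\int\tilde f\,D^s_{-z}D^s_z\tilde u|\le\|\tilde f\|_{L^2}\|D^s_z\nabla\tilde u\|_{L^2}$ gives $\|D^s_z\nabla\tilde u\|_{L^2}\lesssim\|\tilde f\|_{L^2}$ uniformly in $s$, hence $\partial_z\nabla\tilde u\in L^2$. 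Away from the lateral boundary the classical interior difference-quotient argument in $x,y$ applies verbatim: the cutoff test functions are interior-supported, so they vanish on the lateral boundary and integrate to zero, hence belong to $\Hilbert$, and one obtains all second derivatives locally in the interior.

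\emph{Step 3: the lateral boundary.} Near a point of $\partial\Omega$ I would flatten the curve $\partial\Omega$, leaving $z$ untouched, so that $\Delta$ becomes a uniformly elliptic operator $\sum_{i,j\le2}a^{ij}\partial_{y_i y_j}+\sum_i b^i\partial_{y_i}+\partial_{zz}$ with smooth coefficients on a half-cylinder $\{y_1>0\}$, and take difference quotients in the flattened tangential direction $y_2$. The crucial admissibility point: since the trace of $u$ on $\{y_1=0\}$ equals $c(z)$, independent of $y_2$, we have $D^s_{y_2}u\equiv0$ on $\{y_1=0\}$, so for a boundary cutoff $\eta$ the function $\eta^2D^s_{y_2}u$ vanishes on the lateral boundary, and being a tangential difference quotient of a compactly supported function it also integrates to zero; hence $-D^{-s}_{y_2}(\eta^2D^s_{y_2}u)\in\Hilbert$ is an admissible test function. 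The usual estimate then yields $\|\eta\,D^s_{y_2}\nabla u\|_{L^2}\lesssim\|f\|_{L^2}$ uniformly in $s$, i.e. $\partial_{y_2}\nabla u\in L^2$ near the boundary. Together with $\partial_z\nabla u\in L^2$ this controls every second derivative except $\partial_{y_1 y_1}u$, which is read off the flattened equation, $a^{11}\partial_{y_1 y_1}u=f-\partial_{zz}u-2a^{12}\partial_{y_1 y_2}u-a^{22}\partial_{y_2 y_2}u-\sum_i b^i\partial_{y_i}u\in L^2$, using $a^{11}$ bounded below. A partition of unity in $(x,y)$ over $\overline\Omega$ assembles the interior and boundary estimates into $\|\tilde u\|_{H^2(\Omega\times\mathbb{T}_{2h})}\lesssim\|\tilde f\|_{L^2}+\|\tilde u\|_{H^1}\lesssim\|f\|_{L^2}$, and restriction to $\Omega\times[0,h]$ finishes the proof.

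\emph{Main obstacle.} The subtle part is not any individual estimate but keeping the difference-quotient test functions inside $\Hilbert$. Because the lateral datum $c(z)$ is a priori only $H^{1/2}$ in $z$, one cannot subtract it off to reduce to homogeneous Dirichlet data, and must instead work with the constrained variational problem directly: the $z$-derivatives have to be handled globally — which is exactly what the reflection makes possible, since it simultaneously removes the non-smooth top/bottom edges and turns translation in $z$ into a symmetry — whereas in the $(x,y)$-slices it is precisely the constancy of the trace $c(z)$ along $\partial\Omega$ that makes the localized tangential difference quotients annihilate the boundary data and remain in $\Hilbert$. Verifying these admissibility facts, together with the claim that even reflection preserves the weak formulation (the one use of $\partial_\nu u=0$), is the heart of the argument; the rest is the routine difference-quotient machinery.
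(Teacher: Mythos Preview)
Your argument is correct and shares its skeleton with the paper's: both use even reflection in $z$ (you periodize fully to $\Omega\times\mathbb{T}_{2h}$; the paper reflects once with a cutoff $\eta(z)$ and then repeats over the other face) and then run difference quotients in $z$ to obtain $\partial_z\nabla u\in L^2$. The genuine difference is in how the horizontal second derivatives are recovered. You flatten $\partial\Omega$, take tangential difference quotients (using that $u|_{\partial\Omega}=c(z)$ kills the boundary trace of $D^s_{y_2}u$, so the test function stays admissible), and then read off the normal--normal derivative from the transformed equation. The paper instead takes a shortcut: once $\partial_{zz}u\in L^2$, the equation gives $\lap u = f + A(f,0,0) - \partial_{zz}u \in L^2(\Omega)$ for a.e.\ fixed $z$, and classical 2D elliptic regularity for the Dirichlet problem on $\Omega$ with constant boundary value $c(z)$ yields $\partial_{xx}u,\partial_{xy}u,\partial_{yy}u\in L^2$ slice by slice. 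The paper's route is shorter because it invokes 2D regularity as a black box; yours is self-contained and makes transparent exactly where the lateral constraint enters. One minor wrinkle in your Step~3: after boundary flattening the change-of-variables Jacobian is nonconstant, so the pulled-back test function need not literally have mean zero over $\Omega\times\mathbb{T}_{2h}$; but subtracting the mean only produces a lower-order term bounded by $\|f\|_{L^2}^2$, so this is harmless.
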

\begin{proof}
Formally, the assumptions that $$\partial_\nu u \equiv 0, \qquad \int_{\partial\Omega\times\{z\}} \grad u \cdot \nu_s \equiv 0$$
give
\begin{align*}
\int_{\Omega\times[0,h]} \nabla(\partial_z u) \cdot \nabla (\partial_z u) = - \int_{\Omega\times[0,h]} \partial_z u \Delta(\partial_z u) = \int_{\Omega\times[0,h]} \partial_{zz}u f,
\end{align*}
implying that
\begin{align}\label{dzzbound}
\| \partial_{zz}u \|_{L^2(\Omega\times[0,h])} \leq C\| f \|_{L^2(\Omega\times[0,h])}.
\end{align}
Regularity of $\lap u$ would then follow from the equality 
$$ \lap u = f + A(f,0,0) - \partial_{zz}u. $$
Then we can write that for fixed $z$,
\[ 
\begin{dcases}
       \lap u = f + A(f,0,0) - \partial_{zz}u &  \Omega\times\{z\} \\
       u =c(z) &  \partial\Omega\times\{z\}.\\
       \end{dcases}
\]
Applying classical elliptic regularity theory $z$ by $z$ shows then that $\partial_{xy}u, \partial_{xx}u, \partial_{yy}u   \in L^2(\Omega\times[0,h])$.  Thus it remains to rigorously show \eqref{dzzbound}. 

Define
\[ u_E(x,y,z) = 
\begin{dcases}
       u(x,y,z) &  z\in[0,h] \\
       u(x,y,-z) &  z\in[-h,0]\\
       \end{dcases}
\]
and define $f_E$ similarly.  Let $\eta(z)$ be a smooth cutoff function depending only on $z$ such that $\eta \equiv 1$ for all $z\in[-\frac{h}{2},\frac{h}{2}]$ and $\eta$ is compactly supported in $[-\frac{3h}{4}, \frac{3h}{4}]$.  Define the difference quotient operator
$$ T_\epsilon \phi = \frac{\phi(x,y,z+\epsilon)-\phi(x,y,z)}{\epsilon}. $$
Then we can write 
\begin{align*}
-\int_{\Omega\times[-h,h]} \nabla u_E \cdot \nabla \left( T_{-\epsilon}(\eta^2 T_\epsilon u_E) \right)  &= \int_{\Omega\times[-h,h]} \nabla(T_\epsilon u_E) \cdot \nabla(\eta^2 T_\epsilon u_E)\\
&= \int_{\Omega\times[-h,h]} \nabla(T_\epsilon u_E) \cdot \nabla(T_\epsilon u_E) \eta^2\\
&\qquad + \int_{\Omega\times[-h,h]} \nabla(T_\epsilon u_E) \cdot \nabla \eta (2\eta T_\epsilon u_E)\\
&\geq \int_{\Omega\times[-\frac{h}{2},\frac{h}{2}]} |\nabla(T_\epsilon u_E)|^2 + \int_{\Omega\times[-h,h]} \nabla(T_\epsilon u_E) \cdot \nabla \eta (2\eta T_\epsilon u_E).
\end{align*}
Rearranging, we have that 
\begin{align*}
\int_{\Omega\times[-\frac{h}{2}, \frac{h}{2}]} |\nabla(T_\epsilon u_E)|^2 &\leq -\int_{\Omega\times[-h,h]} \nabla(T_\epsilon u_E) \cdot \nabla \eta (2\eta T_\epsilon u_E) \\
&\qquad -\int_{\Omega\times[-h,h]} \nabla u_E \cdot \nabla \left( T_{-\epsilon}(\eta^2 T_\epsilon u_E) \right)\\
&:= I + II.
\end{align*}
Examining I, we have that
\begin{align}
I &\leq C(\eta) \|\nabla(T_\epsilon u_E)\|_{L^2(\Omega\times[-h, h])}  \|\nabla u_E\|_{L^2(\Omega\times[-h,h])}\nonumber\\
&\leq C(\eta) \left( \frac{1}{8}\| \nabla(T_\epsilon u_E)\|_{L^2(\Omega\times[-h,h])}^2 + 4 \| f \|^2_{L^2(\Omega\times[0,h])} \right) \label{boundI}
\end{align}
Moving to II, we have that 
\begin{align}
II &= \int_{\Omega\times[-h,h]} T_{-\epsilon}(\eta^2(T_\epsilon u_E)) f_E \nonumber\\
&\leq C(\eta) \left(\frac{1}{8}\| T_{-\epsilon}T_\epsilon u_E \|^2_{L^2(\Omega\times[-h, h])} + 4\|f\|^2_{L^2(\Omega\times[0,h])} \right) \label{boundII}
\end{align}
Combining \eqref{boundI} and \eqref{boundII} and repeating the argument but this time with a reflection over $z=h$, it follows that
$$  \int_{\Omega\times[0,h]}| \nabla(T_\epsilon u) |^2 + \int_{\Omega\times[0,h]}| T_{-\epsilon}(T_\epsilon u) |^2 \leq 100C(\eta) \|f\|_{L^2(\Omega\times[0,h])}.  $$
The uniformity of this inequality in $\epsilon$ allows us to pass to a weak limit as $\epsilon\rightarrow 0$ to conclude that 
$$\| \nabla(\partial_z u) \|_{L^2(\Omega\times[0,h])} \leq C(\Omega,\eta) \| f \|_{L^2(\Omega\times[0,h])}^2.  $$
Regularity of $\partial_{xx} u$, $\partial_{xy}u $, and $\partial_{yy}u$ follows as described before, finishing the proof of the lemma.
\end{proof}

\begin{lemma}[Effect of $j$]\label{effectofj}
Let $u\in\Hilbert$ be a variational solution to 
\[
\begin{dcases}
       \Delta u = 0 &  \Omega\times[0,h] \\
       \partial_{\nu} u = 0 & \Omega \times \{0,h\}\\
       u(x,y,z) = c(z) \qquad & \partial\Omega \times \{z>0\}\\
       \int_{\partial\Omega\times\{z\}}  \grad u\cdot \nu_s =j(z) & [0,h].
       \end{dcases}
\]
for data $j\in H^s([0,h])$, $s\in [-\frac{1}{2}, \frac{1}{2}]$.  Then 
$$ \| \nabla u \|_{H^{(s+\frac{1}{2})}(\Omega\times[0,h])} \leq C(\Omega,h) \| j\|_{H^s([0,h])}. $$
\end{lemma}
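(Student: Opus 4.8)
The plan is to use the homogeneous Neumann conditions on $z=0,h$ to reduce the problem, by an even $2h$-periodic reflection in $z$, to a harmonic function on $\Omega\times\mathbb{R}$ — whose boundary $\partial\Omega\times\mathbb{R}$ is smooth, the reflection having removed the edges of the cylinder — and then to separate variables in $z$. First one reduces to the normalization $\int_0^h j=0$: by the lemma on the effect of the compatibility condition a general datum only changes $\Delta u$ by an additive constant $A(0,0,j)$, hence changes $u$ by a function quadratic in $z$, which contributes at most $C|A(0,0,j)|\lesssim|\int_0^h j|=|\langle j,1\rangle|\le C\|j\|_{H^{-1/2}(0,h)}\le C\|j\|_{H^s(0,h)}$ to $\|\nabla u\|_{H^{s+1/2}}$ whenever $s+\tfrac12\le1$. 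Having normalized, extend $u$ evenly and $2h$-periodically in $z$ to $u_E\in H^1_{\mathrm{loc}}(\Omega\times\mathbb{R})$; since $\partial_\nu u=0$ on $z=0,h$ the reflection $u_E$ stays harmonic, with $u_E|_{\partial\Omega\times\mathbb{R}}$ equal to the corresponding extension of $c(z)$. Expanding in the cosine basis in $z$, $u_E=\sum_{k\ge0}U_k(x,y)\cos(\mu_k z)$ with $\mu_k=\pi k/h$, harmonicity becomes $\lap U_k=\mu_k^2 U_k$ in $\Omega$ with the \emph{constant} boundary value $U_k|_{\partial\Omega}=c_k$; uniqueness for the positive operator $-\lap+\mu_k^2$ then forces $U_k=c_k w_k$, where $w_k$ solves $(-\lap+\mu_k^2)w_k=0$ in $\Omega$ with $w_k\equiv1$ on $\partial\Omega$.

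The effect of $j$ is read off from the divergence theorem on $\Omega$: with $W_k:=\int_\Omega w_k$,
\[
j_k:=\int_{\partial\Omega}\partial_{\nu_s}U_k\,d\omega=\mu_k^2\int_\Omega U_k=\mu_k^2 W_k\, c_k,
\]
and the zero-mean constraints built into $\Hilbert$ force $U_0\equiv0$, consistent with $j_0=0$. Hence $c_k=j_k/(\mu_k^2 W_k)$, and the whole estimate reduces to the size of $W_k$ and of the $\Omega$-norms of $w_k$. The key quantitative input is the two-sided bound $W_k\asymp(1+\mu_k)^{-1}$: the upper bound comes from $0\le w_k\le1$ (maximum principle) together with the exponential barrier $w_k(x)\le e^{-c\mu_k\dist(x,\partial\Omega)}$ obtained by comparison in a fixed tubular neighbourhood of $\partial\Omega$, and the (essential) lower bound from a matching subsolution barrier giving $w_k\ge\tfrac12$ on a layer of width $\sim\mu_k^{-1}$, the finitely many small values of $\mu_k$ (all $\ge\pi/h$) being handled by compactness. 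Feeding this into the energy identity $\|\nabla w_k\|_{L^2(\Omega)}^2=\mu_k^2(W_k-\|w_k\|_{L^2(\Omega)}^2)\le\mu_k^2 W_k$, the bound $\|w_k\|_{L^2(\Omega)}^2\le W_k$ (from $0\le w_k\le1$), and the standard elliptic estimate $\|w_k\|_{H^2(\Omega)}\lesssim\|\lap w_k\|_{L^2(\Omega)}+1=\mu_k^2\|w_k\|_{L^2(\Omega)}+1$ on the fixed smooth domain $\Omega$, one gets $|c_k|\lesssim(1+k)^{-1}|j_k|$ and, uniformly in $k\ge1$,
\begin{align*}
\|U_k\|_{L^2(\Omega)}&\lesssim(1+k)^{-3/2}|j_k|, &
\|\nabla U_k\|_{L^2(\Omega)}&\lesssim(1+k)^{-1/2}|j_k|, &
\|\nabla^2 U_k\|_{L^2(\Omega)}&\lesssim(1+k)^{1/2}|j_k|.
\end{align*}

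It remains to reassemble. Using $\partial_{zz}u=-\lap u$ and the orthogonality of $\{\cos(\mu_k z)\}$ on $[0,h]$, the per-mode bounds give
\begin{align*}
\|\nabla u\|_{L^2(\Omega\times[0,h])}^2&\lesssim\sum_{k}(1+k)^{-1}|j_k|^2\asymp\|j\|_{H^{-1/2}(0,h)}^2,\\
\|\nabla u\|_{H^1(\Omega\times[0,h])}^2&\lesssim\sum_{k}(1+k)|j_k|^2\asymp\|j\|_{H^{1/2}(0,h)}^2,
\end{align*}
which is precisely the assertion at the two endpoints $s=\pm\tfrac12$. Since the variational solution is unique, $j\mapsto\nabla u$ is linear, so the intermediate range $s\in(-\tfrac12,\tfrac12)$ follows by interpolating the two bounded operators $H^{-1/2}(0,h)\to L^2(\Omega\times[0,h])$ and $H^{1/2}(0,h)\to H^1(\Omega\times[0,h])$, using $[L^2,H^1]_\theta=H^\theta$ on $\Omega\times[0,h]$ and $[H^{-1/2},H^{1/2}]_\theta=H^{\theta-1/2}$ on $(0,h)$ (realized most cleanly, as with the $\bar H^s$ spaces, through the cosine expansion on $[0,h]$). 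The main obstacle is exactly the lower bound $W_k\gtrsim(1+\mu_k)^{-1}$: every other ingredient is bookkeeping with elliptic estimates on the fixed smooth domain $\Omega$ plus a routine interpolation, but without this lower bound the coefficients $c_k$ — and hence $\nabla u$ near $\partial\Omega\times[0,h]$ — cannot be controlled, and establishing it rigorously requires a careful $k$-uniform barrier construction in a tubular neighbourhood of $\partial\Omega$. (Alternatively one could avoid separating variables and instead reflect in $z$ and run the difference-quotient argument of the previous lemma directly, but the Fourier reduction makes the gain of $\tfrac12$ derivative most transparent.)
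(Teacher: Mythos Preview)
Your approach is correct and genuinely different from the paper's. One small slip: the correction removing the mean of $j$ is not ``a function quadratic in $z$'' --- such a function cannot satisfy both Neumann conditions at $z=0,h$ unless it is affine, hence harmonic. The right correction is a function of $(x,y)$ alone, the unique mean-zero $v$ with $\lap v=A(0,0,j)$ in $\Omega$ and constant lateral trace, and your estimate $\|\nabla v\|_{H^{s+1/2}}\lesssim|A|\lesssim\|j\|_{H^{-1/2}}$ survives unchanged.

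The paper proves the endpoint $s=\tfrac12$ by the difference-quotient route you mention parenthetically: even reflection over $z=0$ and $z=h$, mollification in $z$, the test function $\phi_\epsilon*T_{-\epsilon}\bigl(\eta^2 T_\epsilon(u_E*\phi_\epsilon)\bigr)$, and an $H^{1/2}$--$H^{-1/2}$ splitting (via Fourier series in $z$) of the resulting boundary term $\int(\eta^2 T_\epsilon u_E)(T_\epsilon j_E)$; regularity of $\partial_{xx}u,\partial_{xy}u,\partial_{yy}u$ then follows slice-by-slice from 2D elliptic theory as in \cref{effectoff}. Your separation of variables into the modes $(-\lap+\mu_k^2)w_k=0$, $w_k|_{\partial\Omega}=1$, makes the half-derivative gain completely transparent through the boundary-layer scaling $W_k\asymp\mu_k^{-1}$, at the cost of the barrier argument you correctly flag as the main obstacle. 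The paper's argument avoids barriers and the maximum principle entirely and, being purely variational, transfers directly to the variable-coefficient operator $\mathcal{L}=\lap+\partial_z(\lambda\partial_z)$; your mode decomposition would not, and would require the $z$-change-of-variables reduction the paper performs separately for that extension.
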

\begin{proof}
The case $s=-\frac{1}{2}$ is the content of \cref{elliptic}.  We shall prove the case $s=\frac{1}{2}$ by hand and deduce the intermediate cases by interpolation.  

The proof for $s=\frac{1}{2}$ follows closely that of \cref{effectoff}.  Define $u_E$ and $j_E$ on $[-h,h]$ by reflection as before,  and define $T_\epsilon$ and $\eta$ similarly as well. In addition, let $\phi_\epsilon(z)$ be a one dimensional, smooth, even mollifier supported on a ball of radius $\epsilon$ around $0$. Note $c'(z)$ is yet not well defined as $\nabla u$ only belongs to $L^2([0,h])$ for now.  However, $c$ should satisfy $c'(0)=c'(h)=0$, and we shall mollify our test function in $z$ to take advantage of this. Thus we choose our test function to be 
$$ \phi_\epsilon \ast T_{-\epsilon}\left(\eta^2 T_\epsilon (u_E\ast \phi_\epsilon)\right) $$
Then we can write 
\begin{align*}
-\int_{\Omega\times[-h,h]} \nabla u_E \cdot \nabla &\left( \phi_\epsilon \ast T_{-\epsilon}(\eta^2 T_\epsilon (u_E\ast \phi_\epsilon)) \right)  = \int_{\Omega\times[-h,h]} \nabla(T_\epsilon (u_E\ast \phi_\epsilon)) \cdot \nabla(\eta^2 T_\epsilon (u_E\ast \phi_\epsilon))\\
&= \int_{\Omega\times[-h,h]} \nabla(T_\epsilon (u_E\ast \phi_\epsilon)) \cdot \nabla(T_\epsilon (u_E\ast \phi_\epsilon)) \eta^2\\
&\qquad + \int_{\Omega\times[-h,h]} \nabla(T_\epsilon (u_E\ast \phi_\epsilon)) \cdot \nabla \eta (2\eta T_\epsilon (u_E\ast \phi_\epsilon))\\
&\geq \int_{\Omega\times[-\frac{h}{2},\frac{h}{2}]} |\nabla(T_\epsilon (u_E\ast \phi_\epsilon))|^2\\
&\qquad + \int_{\Omega\times[-h,h]} \nabla(T_\epsilon (u_E\ast \phi_\epsilon)) \cdot \nabla \eta (2\eta T_\epsilon (u_E\ast \phi_\epsilon)).
\end{align*}
Rearranging, we have that 
\begin{align*}
\int_{\Omega\times[-\frac{h}{2}, \frac{h}{2}]} |\nabla(T_\epsilon (u_E\ast \phi_\epsilon))|^2 &\leq -\int_{\Omega\times[-h,h]} \nabla(T_\epsilon (u_E\ast \phi_\epsilon)) \cdot \nabla \eta (2\eta T_\epsilon(u_E\ast \phi_\epsilon)) \\
&\qquad -\int_{\Omega\times[-h,h]} \nabla (u_E\ast \phi_\epsilon) \cdot \nabla \left( T_{-\epsilon}(\eta^2 T_\epsilon (u_E\ast \phi_\epsilon)) \right)\\
&:= I + II.
\end{align*}

Examining I, we have that
\begin{align}
I &\leq C(\eta) \|\nabla(T_\epsilon (u_E\ast \phi_\epsilon))\|_{L^2(\Omega\times[-h, h])}  \|\nabla (u_E\ast \phi_\epsilon)\|_{L^2(\Omega\times[-h,h])}\nonumber\\
&\leq C(\eta) \left( \frac{1}{8}\| \nabla(T_\epsilon (u_E\ast \phi_\epsilon))\|_{L^2(\Omega\times[-h,h])}^2 + 4 \| j \|^2_{L^2([0,h])} \right) \label{boundIa}
\end{align}

Before examining II, notice that due to the compact support of $\eta$ in $[-h,h]$, we can assume without loss of generality that $u_E\ast\phi_\epsilon|_{\partial\Omega\times[0,h]}$ and $j_E\ast\phi_\epsilon$ are smooth, compactly supported functions on $[-h,h]$ and therefore can be expanded in Fourier series with coefficients $\hat{u}(k)\hat{\phi}_\epsilon(k)$ and $\hat{j}(k)\hat{\phi}_\epsilon(k)$, respectively. Note also that since $T_\epsilon$ ignores constants, we can assume without loss of generality that $\hat{u}(0)=\hat{j}(0)=0$, ensuring that fractional laplacians (as Fourier multipliers) of $u_E$ and $j_E$ are well-defined on $[0,h]$. Furthermore, since $(c_E\ast\phi)'(z)$ vanishes at $0$, the reflected function $u_E\ast\phi(z)|_{\partial\Omega\times[0,h]}$ belongs to $H^2([-h,h])$. In addition, $|\hat{\phi}_\epsilon(k)|\leq 1$ for all $k$ and converges to 1 as $\epsilon\rightarrow 0$. Then we can write
\begin{align}
II &= \int_{-h}^h (\eta^2 T_\epsilon (u_E\ast\phi_\epsilon)) T_\epsilon(j_E\ast\phi_\epsilon) \nonumber\\
&=  \sum_{k=-\infty}^\infty \left(\widehat{(T_\epsilon j_E)}(k) \frac{1}{k^\frac{1}{2}}\right)\left( \widehat{(\eta^2 T_\epsilon u_E )}(k) k^\frac{1}{2} \right) \hat{\phi}_\epsilon(k)^2 \nonumber\\
&\leq C(\eta) \left(\frac{1}{8}\| T_\epsilon u_E \|^2_{H^\frac{1}{2}(\partial\Omega\times[-h, h])} + 4\|T_\epsilon (-\lap)^{-\frac{1}{4}}j_E\|^2_{L^2([-h,h])} \right) \nonumber\\
&\leq C(\eta,\Omega) \left(\frac{1}{8}\|\nabla( T_\epsilon u_E ) \|^2_{L^2(\Omega\times[-h, h])} + 4\| j_E\|^2_{H^{\frac{1}{2}}([-h,h])} \right) .\label{boundIIa}
\end{align}
The last line follows from applying \eqref{trace} to $T_\epsilon u_E$ and noticing that 
$$\| T_\epsilon (-\lap)^{-\frac{1}{4}}j_E \|_{L^2} \approx \| \half (-\lap)^{-\frac{1}{4}}j_E \|_{L^2} \approx \|j_E\|_{H^\frac{1}{2}} . $$
Combining \eqref{boundIa} and \eqref{boundIIa} and repeating the argument but this time with a reflection over $z=h$, it follows that
$$  \int_{\Omega\times[0,h]}| \nabla(T_\epsilon u\ast\phi_\epsilon) |^2 \leq 100C(\eta,\Omega) \|j\|^2_{H^\frac{1}{2}([0,h])}.  $$
The uniformity of this inequality in $\epsilon$ allows us to pass to a weak limit as $\epsilon\rightarrow 0$ to conclude that 
$$\| \nabla(\partial_z u) \|_{L^2(\Omega\times[0,h])} \leq C(\Omega,\eta) \| j \|_{H^\frac{1}{2}([0,h])}^2.  $$
Regularity of $\partial_{xx} u$, $\partial_{xy}u $, and $\partial_{yy}u$ follows as for \cref{effectoff}, finishing the case $s=\frac{1}{2}$.  The intermediate cases follow again from interpolation.  
\end{proof}

We can now prove \cref{superduperelliptic}. 
\begin{proof}[Proof of \cref{superduperelliptic}]
We begin with $\lambda\equiv 1$, in which case $(V)$ is given by
\[
\begin{dcases}
       \Delta u = f &  \Omega\times[0,h] \\
       \partial_{\nu} u = g & \Omega \times \{0,h\}\\
       u(x,y,z) = c(z) & \partial\Omega \times [0,h]\\
       \int_{\partial\Omega\times\{z\}}  \grad u\cdot \nu_s =j(z) & [0,h].
       \end{dcases}  \qquad (V)
\]
First, apply \cref{effectofg} to build a solution $u_1$ to 
\[
\begin{dcases}
       \Delta u_1 = 0 &  \Omega\times[0,h]\\
       \partial_{\nu} u_1 = g & \Omega \times \{0,h\}\\
       u_1(x,y,z) = 0 & \partial\Omega \times [0,h]\\
       \end{dcases}  
\]
which satisfies
\begin{align*}
\| \nabla u_1 \|_{H^\frac{1}{2}(\Omega\times[0,h])} \leq C(\Omega,h) \| g \|_{L^2(\Omega\times\{0,h\})}.
\end{align*}
Now choose $c_1$ such that $\tilde{u}_1 = u_1 + c_1$ has mean value zero on $\Omega\times[0,h]$; then 
\begin{align}\label{firstbound}
\|\nabla\tilde{u}_1\|_{H^\frac{1}{2}(\Omega\times[0,h])}=\|\nabla{u}_1\|_{H^\frac{1}{2}(\Omega\times[0,h])} \leq C(\Omega,h) \| g \|_{L^2(\Omega\times\{0,h\})}.
\end{align}
By the trace estimate \eqref{trace}, 
$$ {j}_1(z) := \int_{\partial\Omega\times\{z\}} \grad \tilde{u}_1 \cdot \nu_s$$
is well-defined in $L^2([0,h])$ and satisfies
$$ \|j_1\|_{L^2([0,h])} \leq C(\Omega,h) \|\nabla\tilde{u}_1\|_{H^\frac{1}{2}(\Omega\times[0,h])} \leq C(\Omega,h) \| g \|_{L^2(\Omega\times\{0,h\})}.$$
Therefore, $\tilde{u}_1$ is the unique variational solution to 
\[
\begin{dcases}
       \Delta \tilde{u}_1 = 0 &  \Omega\times[0,h] \\
       \partial_{\nu} \tilde{u}_1 = g & \Omega \times \{0,h\}\\
       \tilde{u}_1(x,y,z) = c_1 & \partial\Omega \times [0,h]\\
       \int_{\partial\Omega\times\{z\}}  \grad \tilde{u}_1\cdot \nu_s =j_1(z) & [0,h].
       \end{dcases}  
\]
Now define $u_2:= u - \tilde{u}_1$; $u_2$ is then the unique variational solution to 
\[
\begin{dcases}
       \Delta{u}_2 = f &  \Omega\times[0,h] \\
       \partial_{\nu} {u}_2 = 0 & \Omega \times \{0,h\}\\
       {u}_2(x,y,z) = c_2(z) & \partial\Omega \times [0,h]\\
       \int_{\partial\Omega\times\{z\}}  \grad {u}_2\cdot \nu_s =j(z)-j_1(z) & [0,h].
       \end{dcases} 
\]
Define $u_3$ to as the unique variational solution to 
\[
\begin{dcases}
       \Delta{u}_3 = f &  \Omega\times[0,h] \\
       \partial_{\nu} {u}_3 = 0 & \Omega \times \{0,h\}\\
       {u}_3(x,y,z) = c_3(z) & \partial\Omega \times [0,h]\\
       \int_{\partial\Omega\times\{z\}}  \grad {u}_3\cdot \nu_s =0 & [0,h]
       \end{dcases} 
\]
and $u_4$ as the unique variational solution to 
\[
\begin{dcases}
       \Delta{u}_4 = 0 &  \Omega\times[0,h] \\
       \partial_{\nu} {u}_4 = 0 & \Omega \times \{0,h\}\\
       {u}_4(x,y,z) = c_4(z) & \partial\Omega \times [0,h]\\
       \int_{\partial\Omega\times\{z\}}  \grad {u}_4\cdot \nu_s =j(z)-j_1(z) & [0,h]
       \end{dcases}  
\]
so that $u_2 = u_3+u_4$.  Applying \cref{effectoff} to $u_3$ and \cref{effectofj} to $u_4$, we conclude that 
\begin{align}\label{secondbound}
\| \nabla u_2 \|_{H^\frac{1}{2}(\Omega\times[0,h])} \leq C(\Omega,h) \left( \|f\|_{L^2(\Omega\times[0,h])} + \|j-j_1\|_{L^2([0,h])} \right).
\end{align}
Combining \eqref{firstbound} and \eqref{secondbound}, we conclude that
$$ \| \nabla u \|_{H^\frac{1}{2}(\Omega\times[0,h])} = \| \nabla (\tilde{u}_1 + u_2) \|_{H^\frac{1}{2}(\Omega\times[0,h])} \leq C(\Omega,h) \left( \|f\|_{L^2} + \|g\|_{L^2} + \|j\|_{L^2}  \right). $$

We now sketch a proof of how to adapt the argument for arbitrary smooth $\lambda$ satisfying $\frac{1}{\Lambda}<\lambda<\Lambda$. Let $\phi_1,\phi_2,\phi_3$ be smooth functions of $z$ such that 
$$ \phi_1+\phi_2+\phi_3\equiv 1 \qquad \forall z\in[0,h] $$
and 
$$ \phi_1\in C_c^\infty(-\delta,2\delta),\qquad \phi_2\in C_c^\infty(\delta,h-\delta),\qquad \phi_3\in C_c^\infty(h-2\delta,h+\delta) $$
for $\delta$ to be chosen later. Because the proofs of \cref{effectoff} and \cref{effectofj} rely only on the variational structure, the difference quotient technique applies as well to general elliptic operators in divergence form (see for example sections 6.3 or 8.3 of Evans \cite{evans}). Since $\partial_\nu(\phi_2 u) \equiv 0$, it follows that $\phi_2 u \in H^\frac{3}{2}(\Omega\times[\delta,h-\delta])$.  

We focus now on $\phi_1 u$; the argument for $\phi_3 u$ is similar. The goal is to perform a change of variables in $z$ such that the elliptic operator after changing variables is given by the standard Laplacian plus lower order terms depending on the change of variables.  By writing
$$ \partial_z ( \lambda \partial_z u) = \lambda \partial_{zz}u + \partial_z \lambda \partial_z u, $$
notice that we can absorb the first order term $\partial_z \lambda \partial_z u$ into the right hand side, which we rename $\tilde{f}$. Then consider the ordinary differential equation
\[
\begin{dcases}
      \theta'(z') = \sqrt[]{\lambda(\theta(z'))} & z\in[0,\delta'] \\
      \theta(0)=0.
       \end{dcases}  
\]
By the Cauchy-Lipschitz theorem, for $\delta'$ small enough there exists a unique smooth solution $\theta$ which, by the positivity of $\lambda$, is a bijection between $[0,\delta']$ and $[0,\theta(\delta')]$.  Choose $\delta<\frac{\theta(\delta')}{2}$. Then 
\begin{align*} 
\partial_{z'z'}(u(x,y,\theta(z'))) &= u_{33}(x,y,\theta(z'))(\theta'(z))^2 + u_3(x,y,\theta(z'))\theta''(z') \\
&= u_{33}(x,y,\theta(z'))\lambda(\theta(z')) + u_3(x,y,\theta(z'))\theta''(z').
\end{align*}
Absorbing the second term $u_3(x,y,\theta(z'))\theta''(z')$ into the right hand side, (up to the effect of the localization $\phi_1$) the elliptic equation becomes 
$$\lap(u\circ \theta) + \partial_{z'z'}(u\circ\theta)  = \tilde{f}\circ\theta - (u_3\circ\theta)\theta'',$$
and we can repeat the original argument to show that $\phi_1 u \in H^\frac{3}{2}(\Omega\times[0,2\delta])$. Repeating the argument for $\phi_3 u$ and summing finishes the proof.

\end{proof}

\section{Proof of \cref{main}}\label{mainsection}

\subsection{Approximate solutions}
First, we adjust the initial data and forcing terms.  Let $\eta_\epsilon$ be a standard $\mathbb{R}^3$ mollifier supported in a ball of radius $\epsilon$. Define the extension of $f$ to $\mathbb{R}^3$ by
\[ f_E(x,y,z) = 
\begin{dcases}
       f_0(x,y,z) &  (x,y,z)\in\Omega\times[0,h] \\
       0 &  \text{otherwise},\\
       \end{dcases}
\] 
and mollify by setting $f_\epsilon:= f_E \ast \eta_\epsilon$.  After similarly extending $a_L(t)$ to $\mathbb{R}^3$ and $g, a_\nu(t)$ to $\mathbb{R}^2\times\{0,h\}$ by zero and mollifying (time by time for the forcing terms), we obtain spatially smooth (for example $a_{L,\epsilon}\in L^1([0,T];C^k(\mathbb{R}^3))$ for any $k$) sequences of functions such that the following convergences hold:
$$ f_\epsilon \rightarrow f_0 \quad \text{in} \quad L^2(\Omega\times[0,h])  $$
$$ g_\epsilon \rightarrow g_0 \quad \text{in} \quad L^2(\Omega\times\{0,h\})  $$
$$ a_{L,\epsilon} \rightarrow a_L \quad \text{in} \quad L^1 \left([0,T];L^2(\Omega\times[0,h])\right)  $$
$$ a_{\nu,\epsilon} \rightarrow a_\nu \quad \text{in} \quad L^1 \left([0,T];L^2(\Omega\times\{0,h\})\right).  $$
We define the approximate (QG) solution operators $S_\epsilon: C\left([0,T];\Hilbert\right)\rightarrow C\left([0,T];\Hilbert\right)$ for $\epsilon>0$ in several steps.  These operators shall provide solutions to linear transport equations with mollified velocity fields.
\begin{enumerate}[label=Step \arabic*:]
\item Let $P\in C\left([0,T];\Hilbert\right)$, and let $c(z)$ be the lateral boundary values of $P$ as usual. We extend $P(t)$ to $\mathbb{R}^3$ for each time in a way which allows for a simple construction of a smooth, stratified velocity field from $\grad^\perp P$ which is supported in a small neighborhood of $\Omega\times[0,h]$. 
\[ P_e(x,y,z) = 
\begin{dcases}
       P(x,y,z) &  (x,y,z)\in\Omega\times[0,h] \\
       c(z) &  (x,y,z)\in \Omega^{\mathsf{C}}\times[0,h]\\
       \end{dcases}
\] 
and 
\[ P_E(x,y,z) = 
\begin{dcases}
       P_e(x,y,z) &  (x,y,z)\in\mathbb{R}^2\times[0,h] \\
       P_e(x,y,0) &  (x,y,z)\in\mathbb{R}^2\times[-\infty,0] \\
       P_e(x,y,h) &  (x,y,z)\in\mathbb{R}^2\times[h,\infty]. \\
       \end{dcases}
\] 
Mollify $P_E$ by setting $P_\epsilon := P_E \ast \eta_\epsilon$. 
\item Consider the transport equations for $F_\epsilon$ and $G_\epsilon$ given by 
\[
\begin{dcases}
        \left(\partial_t + \grad^\perp P_{\epsilon} \cdot \grad \right) \left( F_\epsilon + \beta_0 y\right) = a_{L,\epsilon} & \mathbb{R}^2\times[0,h]\times[0,\infty) \\
         \left(\partial_t + \grad^\perp P_{\epsilon}\cdot\grad\right)  G_\epsilon =a_{\nu,\epsilon} & \mathbb{R}^2\times\{0,h\}\times[0,\infty)\\
      F_\epsilon = f_\epsilon & t=0 \\
      G_\epsilon  = g_\epsilon & t=0. \\
       \end{dcases}  
\]
Since the initial data, forcing terms, and velocity fields are all smooth, we can produce global in time solutions $F_\epsilon$ and $G_\epsilon$ by the method of characteristics. Notice that $F_\epsilon$ and $G_\epsilon$ are defined for $(x,y)\in\mathbb{R}^2$ but supported in a neighborhood of order $\epsilon$ around $\Omega$.
\item At each time $t\geq 0$, apply \cref{elliptic} to define $Q_\epsilon(t)$ as the solution to 
\begin{align*}
B(Q_\epsilon(t), v) & := \int_{\Omega\times[0,h]} \tilde{\nabla} Q_\epsilon(t) \cdot \nabla v \\
&= -\int_{\Omega\times[0,h]} -F_\epsilon(t) v + \int_{\Omega\times\{0,h\}} \lambda G_\epsilon (t) v + \int_0^h v|_{\partial\Omega\times[0,h]} j_0.\\
&=: F(v)
\end{align*}
Define $S_\epsilon(P) := Q_\epsilon$. We remark that because $F_\epsilon$ and $G_\epsilon$ are defined as solutions to transport equations for $(x,y)\in\mathbb{R}^2$ rather than $\Omega$, the compatibility condition is lost.  However, \cref{elliptic} still produces a solution to the abstract variational problem, and we will recover the compatibility condition in the limit.  
\end{enumerate}

In search of fixed points, we will show that the operators $\{ S_\epsilon \}_{\epsilon>0}$ are compact, continuous operators from $C\left([0,T];\Hilbert\right)$ to itself with bounded range. Continuity of the operators results from examining the characteristics of the mollified transport equations, while the proof of compactness will require \cref{superduperelliptic} and the Aubin-Lions lemma. We split the argument into three lemmas.
\begin{lemma}[Continuity]\label{Scontinuity}
The operator $S_\epsilon$ is continuous from $C\left([0,T];\Hilbert\right)$ to itself, with modulus of continuity dependent on $\epsilon$.
\end{lemma}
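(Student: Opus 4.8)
The plan is to fix $\epsilon>0$ and trace the dependence of $S_\epsilon(P) = Q_\epsilon$ on $P$ through the three steps defining $S_\epsilon$, showing that each step is (locally Lipschitz or uniformly) continuous in the appropriate topology. The overall strategy is: a bound on $\|P_1-P_2\|_{C([0,T];\Hilbert)}$ yields a bound on $\|P_{1,\epsilon}-P_{2,\epsilon}\|$ together with bounds on all spatial derivatives of the mollified velocity fields (with constants blowing up as $\epsilon\to 0$, which is allowed); this in turn yields a bound on $\|F_{1,\epsilon}-F_{2,\epsilon}\|$ and $\|G_{1,\epsilon}-G_{2,\epsilon}\|$ via a Gr\"onwall estimate on the transport equations; finally \cref{elliptic} gives $\|Q_{1,\epsilon}(t) - Q_{2,\epsilon}(t)\|_\Hilbert \lesssim \|F_{1,\epsilon}(t)-F_{2,\epsilon}(t)\|_{L^2} + \|G_{1,\epsilon}(t)-G_{2,\epsilon}(t)\|_{L^2}$ by linearity of the variational problem, since the datum $j_0$ is the same for both. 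Taking the sup over $t\in[0,T]$ closes the argument.

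First I would handle Step~1: the extension maps $P\mapsto P_e\mapsto P_E$ are linear and bounded from $\Hilbert$ into (say) $L^2_{loc}(\mathbb{R}^3)$ uniformly in time, because the lateral boundary trace $c(z)$ depends continuously on $P$ by the trace estimate \eqref{trace}, and convolution with the fixed mollifier $\eta_\epsilon$ is a bounded linear map into $C^k$ on any fixed ball for every $k$, with operator norm of order $\epsilon^{-k-3/2}$ or so. Hence $P_1\mapsto \grad^\perp P_{1,\epsilon}$ and all its spatial derivatives are controlled, on the relevant $\epsilon$-neighborhood of $\Omega\times[0,h]$, by $\|P_1 - P_2\|_{C([0,T];\Hilbert)}$ with an $\epsilon$-dependent constant. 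Next, for Step~2, I would compare the two characteristic ODE flows $X_1, X_2$ generated by the Lipschitz velocity fields $\grad^\perp P_{1,\epsilon}$, $\grad^\perp P_{2,\epsilon}$: a standard Gr\"onwall argument gives $\|X_1(t,\cdot) - X_2(t,\cdot)\|_{C^0} \lesssim_{\epsilon,T} \|P_1-P_2\|_{C([0,T];\Hilbert)}$, and then since $F_{i,\epsilon}$ is transported (with smooth forcing $a_{L,\epsilon}$ common to both) along $X_i$ from the common initial datum $f_\epsilon$, one obtains $\|F_{1,\epsilon}(t) - F_{2,\epsilon}(t)\|_{L^2(\Omega\times[0,h])} \lesssim_{\epsilon,T} \|P_1-P_2\|_{C([0,T];\Hilbert)}$, using that $f_\epsilon$ and $a_{L,\epsilon}$ are spatially Lipschitz so the composition with $X_i$ is Lipschitz in the flow; the same works for $G_{i,\epsilon}$ on $\Omega\times\{0,h\}$. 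Finally Step~3 is immediate from \cref{elliptic} and linearity.

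The main obstacle I anticipate is Step~2: controlling the difference of the transported quantities requires more than just closeness of the two flows — one must exploit regularity of the \emph{data and forcing} (not merely $L^2$ bounds) so that $\|f_\epsilon\circ X_1 - f_\epsilon\circ X_2\|_{L^2} \le \Lip(f_\epsilon)\,\|X_1-X_2\|_{C^0}\,|\Omega\times[0,h]|^{1/2}$, and similarly for the time-integrated forcing term, which needs $a_{L,\epsilon}\in L^1_t(C^1_x)$ and the a priori uniform-in-$t$ bound on $\|\grad P_{i,\epsilon}\|_{L^\infty}$ (so that the flows stay in a fixed compact set and the Jacobians stay bounded). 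Care is also needed because $F_{i,\epsilon}$ solves a transport equation \emph{with} forcing, so one writes it via Duhamel along characteristics and estimates each piece; and the incompressibility of $\grad^\perp P_{i,\epsilon}$ must be invoked to keep $L^2$ norms from growing under the flow. None of this is deep, but it is the place where the smoothing at scale $\epsilon$ is genuinely used, which is exactly why the modulus of continuity degenerates as $\epsilon\to 0$.
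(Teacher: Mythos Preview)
Your proposal is correct and follows essentially the same route as the paper: both arguments reduce continuity of $S_\epsilon$ via the linear elliptic estimate of \cref{elliptic} to showing that $F_\epsilon$ and $G_\epsilon$ depend continuously on $P$, and then establish this by writing the transported quantities along the characteristic curves of the mollified velocity field and exploiting the $C^k$ control (with $\epsilon$-dependent constants) on $\grad^\perp P_\epsilon$ together with the smoothness of $f_\epsilon$, $g_\epsilon$, $a_{L,\epsilon}$, $a_{\nu,\epsilon}$. Your remark about invoking incompressibility to preserve $L^2$ norms is not actually needed---both you and the paper end up with \emph{uniform} (pointwise) control of $F_{1,\epsilon}-F_{2,\epsilon}$ on the compact support, which immediately yields the $L^2$ bound.
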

\begin{proof}
Let 
$$ P_n \rightarrow P \quad \text{in} \quad C\left([0,T];\Hilbert\right). $$
Define $S_\epsilon(P_n):= Q_{n,\epsilon}$.  Using the notation from the construction of the operators $S_\epsilon$, let $F_{n,\epsilon}$ and $G_{n,\epsilon}$ be the solutions to the transport equations with mollified velocity fields $\grad^\perp P_{n,\epsilon}$. Applying \cref{elliptic}, for fixed $t\in[0,T]$, 
\begin{align*}
\| \left(Q_{n_1,\epsilon}-Q_{n_2,\epsilon} \right)(t) \|_{\Hilbert} &\lesssim \bigg{(} \|(F_{n_1,\epsilon}-F_{n_2,\epsilon})(t)\|_{L^2(\Omega\times[0,h])} + \|(G_{n_1,\epsilon}-G_{n_2,\epsilon})(t)\|_{L^2(\Omega\times\{0,h\})} \bigg{)}.
\end{align*}
Therefore, it suffices to show that 
\begin{align}\label{continuity}
\sup_{t\in[0,T]} \left\lbrace \|(F_{n_1,\epsilon}-F_{n_2,\epsilon})(t)\|_{L^2(\Omega\times[0,h])} + \|(G_{n_1,\epsilon}-G_{n_2,\epsilon})(t)\|_{L^2(\Omega\times\{0,h\})} \right\rbrace \rightarrow 0
\end{align}
as $n_1,n_2 \rightarrow \infty$. 

First, notice that due to the mollification, given $k\in\mathbb{N}$, there exist constants $C(\epsilon,k)$ depending on $\epsilon, k$ such that 
\begin{align}\label{mollifiedconvergence}
\| \grad^\perp( P_{n_1,\epsilon}-P_{n_2,\epsilon} )\|_{L^\infty\left([0,T];C^k(\Omega\times[0,h])\right)} \leq C(\epsilon,k) \|P_{n_1}-P_{n_2}\|_{C\left([0,T];\Hilbert\right)}.
\end{align}
Fix $(t,x,y,z)\in[0,T]\times\mathbb{R}^2\times[0,h]$, and let $\Gamma_{n_i}$ for $i=1,2$ solve
\[
\begin{dcases}
       \dot{\Gamma}_{n_i}(s) = \grad^\perp P_{n_i}\left(s,\Gamma_{n_i}(s)\right) & s\in [0,t] \\
       \Gamma_{n_i}(t)=(x,y,z) \\
       \end{dcases}    
\]
Then 
$$F_{n_i}(t,x,y,z) = f_\epsilon(\Gamma_{n_i}(t)) + \int_0^t a_{L,\epsilon}(\Gamma_{n_i}(s)) \,ds, $$
and 
$$F_{n_1}(t,x,y,z) - F_{n_2}(t,x,y,z)= f_\epsilon(\Gamma_{n_1}(t))  - f_\epsilon(\Gamma_{n_2}(t))+ \int_0^t a_{L,\epsilon}(\Gamma_{n_1}(s)) - a_{L,\epsilon}(\Gamma_{n_2}(s)) \,ds. $$
Applying \eqref{mollifiedconvergence} and using the smoothness of $f_\epsilon$ and $a_{L,\epsilon}$ shows that as $n_1,n_2 \rightarrow \infty$, $F_{n_1}(t,x,y,z) - F_{n_2}(t,x,y,z)$ converges to $0$ uniformly for $(t,x,y,z) \in [0,T]\times\mathbb{R}^2\times[0,h]$. Arguing similarly for $G_{n_1,\epsilon},G_{n_2,\epsilon}$ then shows \eqref{continuity}.
\end{proof}
\begin{lemma}[Time Derivative Bounds]\label{timederivativebounds}
Let $P\in C\left([0,T];\Hilbert\right)$ with mollified velocity field $P_\epsilon$, and put $S_\epsilon(P):=Q_\epsilon$. Consider $\tilde{\nabla}Q_\epsilon(t)$ as an element of $\Hilbert^*$ acting by the rule
$$ v \rightarrow \langle Q(t), v \rangle_\Hilbert \qquad \forall v \in \Hilbert. $$
Then $\partial_t \tilde{\nabla} Q_\epsilon$ is a bounded linear functional in $L^\infty([0,T];(\Hilbert\cap H^3(\Omega\times[0,h]))^*)$ and 
$$ \|\partial_t \tilde{\nabla}Q_\epsilon\|_{L^\infty\left([0,T];(\Hilbert\cap H^3(\Omega\times[0,h]))^*\right)} \leq C\left( f_0, g_0, a_L, a_\nu, \beta_0,h,\|\grad^\perp P_{\epsilon}\|_{L^\infty([0,T]\times[0,h];L^2(\Omega))} \right) . $$
\end{lemma}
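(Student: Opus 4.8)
The plan is to differentiate in time the variational identity defining $Q_\epsilon$, insert the Step~2 transport equations for $\partial_t F_\epsilon$ and $\partial_t G_\epsilon$, and move every spatial derivative off $F_\epsilon,G_\epsilon$ (which are controlled only in $L^2$) and onto the test function $v\in\Hilbert\cap H^3(\Omega\times[0,h])$, whose gradient is controlled in $L^\infty$ by the three-dimensional embedding $H^3(\Omega\times[0,h])\hookrightarrow W^{1,\infty}(\Omega\times[0,h])$; this is precisely why the dual space is taken over $H^3$.

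Fix $v\in\Hilbert\cap H^3$. Since $F_\epsilon,G_\epsilon$ are built by the method of characteristics from spatially smooth data and $L^1_t$ forcing, $t\mapsto F_\epsilon(t),G_\epsilon(t)$ are absolutely continuous into $L^2$, and differentiating $\langle Q_\epsilon(t),v\rangle_\Hilbert=F(v)$ — the $j_0$-term of $F$ being independent of $t$ — gives for a.e.\ $t$
\[\partial_t\langle Q_\epsilon(t),v\rangle_\Hilbert=\int_{\Omega\times[0,h]}\partial_t F_\epsilon(t)\,v+\int_{\Omega\times\{0,h\}}\lambda\,\partial_t G_\epsilon(t)\,v.\]
Substituting $\partial_t F_\epsilon=-\grad^\perp P_\epsilon\cdot\grad F_\epsilon-\beta_0\partial_x P_\epsilon+a_{L,\epsilon}$ (using $\grad^\perp P_\epsilon\cdot\grad(\beta_0 y)=\beta_0\partial_x P_\epsilon$) and $\partial_t G_\epsilon=-\grad^\perp P_\epsilon\cdot\grad G_\epsilon+a_{\nu,\epsilon}$, the contributions of $\beta_0\partial_x P_\epsilon$, $a_{L,\epsilon}$, $a_{\nu,\epsilon}$ are bounded immediately by H\"older against $\|v\|_{L^2}\le\|v\|_{H^3}$, using $\|\partial_x P_\epsilon(t)\|_{L^2(\Omega\times[0,h])}\lesssim\|\grad^\perp P_\epsilon\|_{L^\infty([0,T]\times[0,h];L^2(\Omega))}$.

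For the advective terms I integrate by parts in $(x,y)$ using $\dive(\grad^\perp P_\epsilon)=0$; the top and bottom boundary terms vanish because $\grad^\perp P_\epsilon$ is horizontal. Since $v\in\Hilbert$, its lateral trace $v|_{\partial\Omega\times\{z\}}=v(z)$ depends only on $z$, so I split $v=\tilde v+(v-\tilde v)$ with $\tilde v(x,y,z):=v(z)$: for the $(v-\tilde v)$ piece the lateral boundary term vanishes and, since $\grad\tilde v\equiv0$, integration by parts leaves $-\int_{\Omega\times[0,h]} F_\epsilon\,\grad^\perp P_\epsilon\cdot\grad v$, bounded by $\|F_\epsilon(t)\|_{L^2}\|\grad v\|_{L^\infty}\|\grad^\perp P_\epsilon(t)\|_{L^2}$; the $\tilde v$ piece equals $\int_0^h v(z)\int_{\partial\Omega}F_\epsilon\,(\grad^\perp P_\epsilon\cdot\nu_s)\,d\omega\,dz$, bounded by $\|v\|_{L^\infty}\|F_\epsilon(t)\|_{L^2(\partial\Omega\times[0,h])}\|\grad^\perp P_\epsilon(t)\|_{L^2(\partial\Omega\times[0,h])}$, and the $G_\epsilon$-term is treated identically on $\Omega\times\{0,h\}$. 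Uniformity in $t$ of $\|F_\epsilon(t)\|_{L^2}$ and $\|G_\epsilon(t)\|_{L^2}$ is the usual transport energy estimate: testing the $F_\epsilon$-equation against $F_\epsilon$ over $\mathbb{R}^2\times[0,h]$ annihilates the advective term (by $\dive\grad^\perp P_\epsilon=0$, the vanishing normal velocity on $z=0,h$, and the compact support of $F_\epsilon$ in $(x,y)$), giving $\frac{d}{dt}\|F_\epsilon(t)\|_{L^2}\le\beta_0\|\partial_x P_\epsilon(t)\|_{L^2}+\|a_{L,\epsilon}(t)\|_{L^2}$, hence $\|F_\epsilon(t)\|_{L^2}\le\|f_0\|_{L^2}+\beta_0 T\|\grad^\perp P_\epsilon\|_{L^\infty([0,T]\times[0,h];L^2(\Omega))}+\|a_L\|_{L^1([0,T];L^2)}$, and likewise for $G_\epsilon$. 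Collecting the estimates and taking the supremum over $\|v\|_{\Hilbert\cap H^3}\le1$ yields the asserted bound.

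I expect the lateral boundary term $\int_0^h v(z)\int_{\partial\Omega}F_\epsilon\,(\grad^\perp P_\epsilon\cdot\nu_s)\,d\omega\,dz$ to be the main obstacle: unlike the classical Neumann situation, the mollified velocity $\grad^\perp P_\epsilon$ need not be tangent to $\partial\Omega\times[0,h]$ and $F_\epsilon,G_\epsilon$ need not vanish there, so this term does not simply disappear. Controlling it uses both the structure of $\Hilbert$ — so that only the $z$-profile of $v$ couples to the lateral boundary — and trace estimates along $\partial\Omega\times[0,h]$ for $F_\epsilon,G_\epsilon,\grad^\perp P_\epsilon$, for which one exploits the spatial smoothness these functions acquire from the mollification. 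The resulting constant depends only on $f_0,g_0,a_L,a_\nu,\beta_0,h$ and $\|\grad^\perp P_\epsilon\|_{L^\infty([0,T]\times[0,h];L^2(\Omega))}$ (with $\Omega$ entering implicitly), and the forcing enters through its $L^1([0,T];L^2)$ norm, which is exactly what the subsequent Aubin--Lions compactness argument needs.
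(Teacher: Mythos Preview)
Your overall strategy coincides with the paper's: pair the variational identity $B(Q_\epsilon(t),v)=F(v)$ against $\phi(t)v$ with $\phi\in C_c^\infty(0,T)$ and $v\in\Hilbert\cap H^3$, substitute the transport equations for $F_\epsilon$ and $G_\epsilon$, move the horizontal derivative onto $v$ so that only $\grad v$ (controlled in $L^\infty$ via $H^3\hookrightarrow W^{1,\infty}$) sits against $F_\epsilon$, $G_\epsilon$, $\grad^\perp P_\epsilon$ (controlled in $L^2$), and note that the $j_0$ term drops out because it is time-independent. The paper records the weak transport identities directly over $\Omega\times[0,h]$ and $\Omega\times\{0,h\}$ and estimates from there, without ever displaying a lateral boundary contribution.

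The genuine gap is in your treatment of the lateral term you isolate,
\[
\int_0^h v(z)\int_{\partial\Omega}F_\epsilon\,(\grad^\perp P_\epsilon\cdot\nu_s)\,d\omega\,dz,
\]
which you bound by $\|v\|_{L^\infty}\,\|F_\epsilon(t)\|_{L^2(\partial\Omega\times[0,h])}\,\|\grad^\perp P_\epsilon(t)\|_{L^2(\partial\Omega\times[0,h])}$ and then propose to control via ``the spatial smoothness these functions acquire from the mollification.'' That smoothness is $\epsilon$-dependent: nothing in the hypotheses bounds $F_\epsilon$ or $\grad^\perp P_\epsilon$ on $\partial\Omega\times[0,h]$ by an $\epsilon$-free constant; only their bulk $L^2(\Omega\times[0,h])$ norms are controlled uniformly. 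Your estimate therefore hides a factor that blows up as $\epsilon\to 0$, which contradicts your concluding sentence that the constant depends only on $f_0,g_0,a_L,a_\nu,\beta_0,h$ and $\|\grad^\perp P_\epsilon\|_{L^\infty([0,T]\times[0,h];L^2(\Omega))}$. This is not cosmetic: the compactness lemma is subsequently applied with $\epsilon_n\to 0$, and it is precisely the $\epsilon$-independence of the present bound that makes Aubin--Lions go through. Either this lateral contribution must be shown to vanish (which is what the paper's asserted identity over $\Omega$ effectively uses), or it must be estimated by the quantities actually listed in the lemma; appealing to mollification regularity does neither.
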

\begin{proof}

The distributional time derivative of $\tilde{\nabla}Q_\epsilon(t)$ is defined by the equality
$$ \langle \partial_t \tilde{\nabla}Q_\epsilon, \phi \rangle  := -\int_0^T \phi'(t) \tilde{\nabla}Q_\epsilon(t) \,dt $$
for all $\phi \in C_c^\infty(0,T)$.  To show that $\partial_t \tilde{\nabla}Q_\epsilon(t) \in L^\infty([0,T];(\Hilbert\cap H^3(\Omega\times[0,h]))^*))$, we test against functions $v\in \Hilbert\cap H^3(\Omega\times[0,h])$. First, recall the definitions of $F_{\epsilon}$ and $G_{\epsilon}$ as the solutions to the linear transport equations with mollified velocity $\grad^\perp P_{\epsilon}$ as in Step 2. Then we have
\begin{align}
-\int_0^T \phi'(t) \langle  &Q_{\epsilon}(t), v \rangle_\Hilbert \,dt \nonumber = -\int_0^T  \int_{\Omega\times[0,h]} \tilde{\nabla} Q_{\epsilon}(t,x,y,z) \cdot \nabla v(x,y,z)\phi'(t) \,dx\,dy\,dz\,dt \nonumber \\
&= -\int_0^T B(Q_{\epsilon}(t), \phi'(t)v) \,dt \nonumber\\
&= -\int_0^T F(\phi'(t)v) \,dt \nonumber\\
&= -\int_0^T \bigg{(} -\int_{\Omega\times[0,h]} F_{\epsilon}(t,x,y,z) \phi'(t)v(x) \,dx\,dy\,dz \nonumber\\
&\qquad +\int_{\Omega\times\{0,h\}} \lambda G_{\epsilon}(t,x,y,z) \phi'(t)v(x,y,z) \,dx\,dy  + \int_0^h j_0(z)v|_{\partial\Omega}(z)\phi'(t)\,dz\bigg{)}\,dt \label{timederivative}
\end{align}
Since $F_{\epsilon}$ and $G_{\epsilon}$ are classical solutions to transport equations, we have that
\begin{align}
\int_0^T \int_{{\Omega}\times[0,h]} \left(\left( v \phi' + \grad^\perp P_{\epsilon} \cdot \grad v \phi  \right)( F_{\epsilon} + \beta_0 y ) + \phi v a_{L,\epsilon}  \right)  = 0 \label{transport1}
\end{align}
and
\begin{align}
\int_0^T \int_{{\Omega}\times\{0,h\}} \left(\left( v \phi' + \grad^\perp P_{\epsilon} \cdot \grad v \phi  \right) G_{\epsilon} + \phi v a_{\nu,\epsilon}  \right) =0 .  \label{transport2} 
\end{align}
Plugging \eqref{transport1} and \eqref{transport2} into \eqref{timederivative} and noticing that 
$$ \int_0^T \int_0^h j_0 v \phi' = - \int_0^T \int_0^h (j_0 v)' \phi = 0 $$
gives
\begin{align}
-\int_0^T &\phi'(t) \langle  Q_{\epsilon}(t), v \rangle_\Hilbert \,dt \nonumber = -\int_0^T \int_{{\Omega}\times[0,h]} \left(\left(\grad^\perp P_{\epsilon} \cdot \grad v \phi  \right)( F_{\epsilon} + \beta_0 y ) + \phi v a_{L,\epsilon}  \right)  \nonumber\\
& \qquad \qquad \qquad \qquad + \int_0^T \int_{{\Omega}\times\{0,h\}} \lambda \left(\left(  \grad^\perp P_{\epsilon} \cdot \grad v \phi  \right) G_{\epsilon} + \phi v a_{\nu,\epsilon}  \right) \nonumber\\
&\leq \|\grad^\perp P_{\epsilon}\|_{L^\infty([0,T]\times[0,h];L^2(\Omega))} \| \grad v \|_{L^\infty(\Omega\times[0,h])} \|\phi\|_{L^\infty(0,T)} \left(\|F_{\epsilon}\|_{L^\infty\left([0,T];L^2(\Omega\times[0,h])\right)}+ \beta_0 h \right) \nonumber\\
&\qquad+\|\phi\|_{L^\infty(0,T)}\|v\|_{L^\infty(\Omega\times[0,h])} \|a_{L,\epsilon}\|_{L^1([0,T];L^2(\Omega\times[0,h]))} \nonumber\\
&\qquad + \Lambda \|\grad^\perp P_{\epsilon}\|_{L^\infty([0,T]\times[0,h];L^2(\Omega))} \| \grad v \|_{L^\infty(\Omega\times\{0,h\})} \|\phi\|_{L^\infty(0,T)} \|G_{\epsilon}\|_{L^\infty([0,T];L^2(\Omega\times\{0,h\}))} \nonumber\\
&\qquad +\|\phi\|_{L^\infty(0,T)}\|v\|_{L^\infty(\Omega\times\{0,h\})} \|a_{\nu,\epsilon}\|_{L^1([0,T];L^2(\Omega\times[0,h]))} \nonumber\\
& \leq \| v \|_{H^3(\Omega\times[0,h])}\|\phi\|_{L^\infty(0,T)} \left(1+\|\grad^\perp P_{\epsilon}\|_{L^\infty([0,T]\times[0,h];L^2(\Omega))}\right) \times \nonumber\\
&\qquad\qquad \left( 1+\|a_{L}\|_{L^1([0,T];L^2(\Omega\times[0,h]))} +\|a_{\nu}\|_{L^1([0,T];L^2(\Omega\times[0,h]))}\right) \times   \nonumber \\
&\qquad\qquad\qquad \left(1+ \|f_0\|_{L^2(\Omega\times[0,h])}+ \beta_0 h  + \Lambda \|g_0\|_{L^2(\Omega\times\{0,h\})}   \right). \nonumber
\end{align}
\end{proof}

\begin{lemma}[Compactness]\label{compactness}
Let $\{\epsilon_n\}_{n=1}^\infty$ be a sequence of positive numbers, $P_n$ be a sequence of functions in $C([0,T];\Hilbert)$, and $S_{\epsilon_n}(P_n):= Q_n$. If there exists $M$ such that the mollified velocity fields $\grad^\perp P_{n,\epsilon_n}$ satisfy 
$$ \sup_n \| \grad^\perp P_{n,\epsilon_n}\|_{L^\infty([0,T]\times[0,h];L^2(\Omega))} < M $$
then up to a subsequence, there exists $Q\in C([0,T];\Hilbert)$ such that $Q_n$ converges strongly in $C([0,T];\Hilbert)$ to $Q$.
\end{lemma}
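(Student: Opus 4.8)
The plan is to show that $\{Q_n\}$ is bounded in $L^\infty([0,T];X)$ for a space $X$ which embeds compactly into $\Hilbert$, that $\{\partial_t Q_n\}$ is bounded in $L^\infty([0,T];Y)$ for a suitably weak space $Y$ with $\Hilbert\hookrightarrow Y$, and then to invoke the Aubin--Lions compactness lemma (in the form due to Simon, giving relative compactness in $C([0,T];\Hilbert)$). For the spatial bound, note that $\grad^\perp P_{n,\epsilon_n}$ is divergence free, so the transport equations defining $F_n,G_n$ preserve the $L^2$ norm up to the contributions of the forcing and, for $F_n$, the lower-order $\beta$-plane term $\beta_0\partial_x P_{n,\epsilon_n}$; since mollification and extension by zero do not increase $L^2$ norms, and since $\|\beta_0\partial_x P_{n,\epsilon_n}(t)\|_{L^2(\Omega\times[0,h])}\le \beta_0\sqrt h\,M$ by hypothesis, we get
\[ \sup_n\ \sup_{t\in[0,T]}\Big(\|F_n(t)\|_{L^2(\Omega\times[0,h])}+\|G_n(t)\|_{L^2(\Omega\times\{0,h\})}\Big)<\infty. \]
Since $Q_n(t)$ is the variational solution with data $(F_n(t),G_n(t),j_0)$ and $j_0\in L^2(0,h)$, \cref{elliptic} and \cref{superduperelliptic} give $\sup_n\|Q_n\|_{L^\infty([0,T];\Hilbert)}<\infty$ together with $\sup_n\sup_{t\in[0,T]}\|\nabla Q_n(t)\|_{H^\frac12(\Omega\times[0,h])}<\infty$. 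Hence $\{Q_n\}$ is bounded in $L^\infty([0,T];X)$, where $X:=\{u\in\Hilbert:\nabla u\in H^\frac12(\Omega\times[0,h])\}$ with norm $\|u\|_\Hilbert+\|\nabla u\|_{H^\frac12}$.

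Next I would check $X\hookrightarrow\hookrightarrow\Hilbert$. Given a bounded sequence $\{u_k\}\subset X$, the gradients $\{\nabla u_k\}$ are bounded in $H^\frac12(\Omega\times[0,h])$, hence relatively compact in $L^2(\Omega\times[0,h])$ by Rellich--Kondrachov; pass to a subsequence with $\nabla u_k\to v$ in $L^2$. Since $\{u_k\}$ is bounded in the Hilbert space $\Hilbert$ and $u\mapsto\nabla u$ is weakly continuous $\Hilbert\to L^2$, a further subsequence has $u_k\rightharpoonup u$ in $\Hilbert$, whence $v=\nabla u$, and then
\[ \|u_k-u\|_\Hilbert^2=\int_{\Omega\times[0,h]}\tilde\nabla(u_k-u)\cdot\nabla(u_k-u)\ \lesssim\ \|\nabla(u_k-u)\|_{L^2(\Omega\times[0,h])}^2\longrightarrow 0, \]
which proves the compact embedding.

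For the time derivative, the standing hypothesis $\sup_n\|\grad^\perp P_{n,\epsilon_n}\|_{L^\infty([0,T]\times[0,h];L^2(\Omega))}<M$ feeds directly into \cref{timederivativebounds}, which bounds $\partial_t\tilde\nabla Q_n$ in $L^\infty([0,T];Y)$ uniformly in $n$, where $Y:=\big(\Hilbert\cap H^3(\Omega\times[0,h])\big)^*$. Identifying each $Q_n(t)\in\Hilbert$ with the functional $v\mapsto\langle Q_n(t),v\rangle_\Hilbert$ on $\Hilbert\cap H^3(\Omega\times[0,h])$, exactly as in \cref{timederivativebounds}, this says $\{\partial_t Q_n\}$ is bounded in $L^\infty([0,T];Y)$; and since $\Hilbert\cap H^3(\Omega\times[0,h])$ is continuously and densely contained in $\Hilbert$, dualizing and using the Riesz identification $\Hilbert\cong\Hilbert^*$ yields a continuous embedding $\Hilbert\hookrightarrow Y$ compatible with this identification. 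We now have the chain $X\hookrightarrow\hookrightarrow\Hilbert\hookrightarrow Y$, with $\{Q_n\}$ bounded in $L^\infty([0,T];X)$ and $\{\partial_t Q_n\}$ bounded in $L^\infty([0,T];Y)$. The Aubin--Lions--Simon lemma then gives that $\{Q_n\}$ is relatively compact in $C([0,T];\Hilbert)$; passing to a subsequence produces $Q\in C([0,T];\Hilbert)$ with $Q_n\to Q$ in $C([0,T];\Hilbert)$, which is the claim.

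The main obstacle is that the only control on the time derivative, coming from \cref{timederivativebounds}, lives in the very weak space $Y$, far weaker than $\Hilbert$. In particular one cannot prove equicontinuity of $\{Q_n\}$ in $C([0,T];\Hilbert)$ by the naive estimate $\|Q_n(t_1)-Q_n(t_2)\|_\Hilbert^2=\big\langle\int_{t_2}^{t_1}\partial_t Q_n\,ds,\,Q_n(t_1)-Q_n(t_2)\big\rangle$, since the test slot $Q_n(t_1)-Q_n(t_2)$ need not lie in $H^3$. This is precisely why the argument must be routed through the Simon version of Aubin--Lions, which only requires the chain $X\hookrightarrow\hookrightarrow\Hilbert\hookrightarrow Y$, and why the identification of $Q_n$ with its Riesz representative $\tilde\nabla Q_n$ — so that a well-defined distributional time derivative valued in $Y$ is available — has to be set up with care.
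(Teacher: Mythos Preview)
Your proof is correct and follows essentially the same route as the paper: both arguments use the Aubin--Lions(--Simon) lemma with the triple $X=\Hilbert\cap H^{3/2}\hookrightarrow\hookrightarrow\Hilbert\cong\Hilbert^*\hookrightarrow(\Hilbert\cap H^3)^*$, invoking \cref{superduperelliptic} for the spatial $H^{1/2}$ gain on $\nabla Q_n$ and \cref{timederivativebounds} for the uniform bound on $\partial_t Q_n$ in the weak dual space. Your write-up is in fact slightly more explicit than the paper's --- you spell out the compact embedding $X\hookrightarrow\hookrightarrow\Hilbert$ and track the $\beta$-plane contribution $\beta_0\partial_x P_{n,\epsilon_n}$ to the $L^2$ bound on $F_n$, which the paper absorbs without comment.
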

\begin{proof}
To set notation, $Q_{n}$ is the solution to the variational problem
$$ B_n(Q_{n}, v) = F_n(v), \qquad v \in \Hilbert $$
described in Step 3. Define the Banach spaces
$$ \mathcal{B}_1 = \Hilbert^*,\qquad  \mathcal{B}_0 = \Hilbert\cap H^\frac{3}{2}(\Omega\times[0,h]),\qquad \mathcal{B}_2 = \left( \Hilbert\cap H^3(\Omega\times[0,h]) \right)^*  $$
We set $u^*\in\Hilbert^*$ as the linear functional on $\Hilbert$ defined by $v \rightarrow \langle u,v \rangle_\Hilbert$. This identification provides an isometric linear bijection between $\Hilbert$ and $\Hilbert^*$. Then by the Rellich-Kondrachov theorem and the observed isomorphism, the embedding of $\mathcal{B}_0$ into $\mathcal{B}_1$ is compact. The inclusion map from $\mathcal{B}_1$ to $\mathcal{B}_2$ is continuous as well.  Applying \cref{elliptic}, invoking the isomorphism between $\Hilbert$ and $\Hilbert^*$, and using the divergence free property of the mollified transport equations, we have that $ Q_{n}^* \in C\left( [0,T]; \Hilbert^* \right)$, and for $t\in[0,T]$,
\begin{align}\label{thefirst}
\| Q_{n,\epsilon}^*(t) \|_{\Hilbert^*} \leq C(\Omega,h,\lambda) \left( \| f_0 \|_{L^2} + \| g_0 \|_{L^2} + \| j_0 \|_{L^2} +\| a_L \|_{L^1\left([0,T];L^2\right)} + \| a_\nu \|_{L^1\left([0,T];L^2\right)} \right).
\end{align}
In addition, \cref{superduperelliptic} provides the bound
\begin{align}\label{thesecond}
\| \tilde{\nabla}Q_{n}(t) \|_{H^\frac{1}{2}(\Omega\times[0,h])} \leq C(\Omega,h,\lambda) \left( \| f_0 \|_{L^2} + \| g_0 \|_{L^2} + \| j_0 \|_{L^2} +\| a_L \|_{L^1\left([0,T];L^2\right)} + \| a_\nu \|_{L^1\left([0,T];L^2\right)} \right),
\end{align}
showing that $Q_{n} \in L^\infty([0,T];\mathcal{B}_0)$.  By \cref{timederivativebounds} and the existence of the constant $M$, $\partial_t (Q_{n}^*)$ is a sequence of operators bounded in $L^\infty([0,T];\mathcal{B}_2)$, and the assumptions of the Aubins-Lions lemma are satisfied.  We have then that $Q_n^*$ is precompact in $C \left([0,T];\Hilbert^*\right)$, and thus $Q_{n}$ is precompact in $C \left([0,T];\Hilbert\right)$. 
\end{proof}
\begin{corollary}[Fixed Points]
Each operator $S_\epsilon$ has a fixed point $\Psi_\epsilon$.
\end{corollary}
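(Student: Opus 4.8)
The plan is to obtain $\Psi_\epsilon$ as a fixed point of $S_\epsilon$ via Schauder's fixed point theorem, applied on a suitable closed ball of $C([0,T];\Hilbert)$.

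First I would pin down a ball that $S_\epsilon$ maps into itself. The point is that the image of $S_\epsilon$ is bounded \emph{independently of the input} $P$. For any $P$, the functions $F_\epsilon,G_\epsilon$ of Step~2 are transported by the divergence-free velocity $\grad^\perp P_\epsilon$, which vanishes identically outside an $\epsilon$-neighborhood of $\Omega$ (there $P_E$ depends only on $z$, so $\grad^\perp P_\epsilon=0$); consequently the flow preserves that bounded neighborhood, $F_\epsilon(t)$ and $G_\epsilon(t)$ remain supported in it, the contribution of the $\beta_0 y$ term along characteristics is bounded by its diameter, and volume preservation of the flow yields
\[
\sup_{t\in[0,T]}\Big(\|F_\epsilon(t)\|_{L^2(\Omega\times[0,h])}+\|G_\epsilon(t)\|_{L^2(\Omega\times\{0,h\})}\Big)\le C\big(f_0,g_0,a_L,a_\nu,\beta_0,\Omega,h,\epsilon\big).
\]
Feeding this into \cref{elliptic} (and using the embedding $L^2([0,h])\hookrightarrow (H^{1/2}([0,h]))^*$, so $\|j_0\|_{(H^{1/2})^*}\lesssim\|j_0\|_{L^2}$) produces a radius $R=R(\mathrm{data},\epsilon)$ such that $S_\epsilon\big(C([0,T];\Hilbert)\big)\subseteq K$, where $K:=\{P\in C([0,T];\Hilbert):\ \sup_{t}\|P(t)\|_\Hilbert\le R\}$ is nonempty, closed, bounded and convex; in particular $S_\epsilon(K)\subseteq K$.

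Next I would verify the remaining hypotheses of Schauder's theorem on $K$. Continuity of $S_\epsilon\colon K\to K$ is precisely \cref{Scontinuity}. For relative compactness of $S_\epsilon(K)$, take an arbitrary sequence $\{P_n\}\subseteq K$; since $\epsilon$ is fixed and $\|P_n(t)\|_\Hilbert\le R$, writing $\grad^\perp P_{n,\epsilon}=P_{n,E}\ast\grad^\perp\eta_\epsilon$ and using the embedding $H^1(\Omega\times[0,h])\hookrightarrow C([0,h];L^2(\Omega))$ together with the trace bound \eqref{trace} to control $\|P_{n,E}(t)\|_{L^\infty_z L^2_{xy}}$ on the bounded region where $\grad^\perp P_{n,\epsilon}$ is supported, one obtains
\[
\sup_n\|\grad^\perp P_{n,\epsilon}\|_{L^\infty([0,T]\times[0,h];L^2(\Omega))}\le C(\Omega,h,\epsilon)\,R=:M<\infty.
\]
Applying \cref{compactness} with the constant sequence $\epsilon_n\equiv\epsilon$ then yields a subsequence along which $S_\epsilon(P_n)$ converges in $C([0,T];\Hilbert)$, so $S_\epsilon(K)$ is relatively compact.

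Thus $S_\epsilon$ is a continuous self-map of the nonempty, closed, bounded, convex set $K$ with relatively compact image, and Schauder's fixed point theorem produces $\Psi_\epsilon\in K$ with $S_\epsilon(\Psi_\epsilon)=\Psi_\epsilon$. The step I expect to require genuine care — and the main obstacle — is the $P$-independent $L^2$ bound on $F_\epsilon(t),G_\epsilon(t)$: a naive Grönwall estimate on the transport equation would pick up a factor $\beta_0 T\|\nabla P_\epsilon\|$, giving only a short-time self-map, and this is circumvented precisely by the compact support of $F_\epsilon,G_\epsilon$ and the volume-preserving (divergence-free) nature of the transport, which make the bound global in time. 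Everything else is a direct assembly of \cref{elliptic}, \cref{superduperelliptic} (through \cref{compactness}), and \cref{Scontinuity}.
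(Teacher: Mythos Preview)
Your proposal is correct and follows essentially the same route as the paper. The paper's own proof cites \cref{Scontinuity} for continuity, observes the mollification bound $\|\grad^\perp P_\epsilon\|_{L^\infty_{t,z}L^2_{xy}}\le C(\epsilon)\|P\|_{C([0,T];\Hilbert)}$, invokes \cref{compactness} with $\epsilon_n\equiv\epsilon$ for compactness, and uses \eqref{thefirst} for the $P$-independent bound on the range; it then concludes via the Leray--Schauder fixed point theorem rather than Schauder on a ball, but since the range is bounded independently of the input these two formulations are interchangeable here. Your more explicit handling of the support of $\grad^\perp P_\epsilon$ and the $\beta_0 y$ contribution simply fills in details the paper leaves implicit.
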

\begin{proof}
\cref{Scontinuity} shows that $S_\epsilon$ is continuous.  By the mollification of the velocity fields, there exists $C(\epsilon)$ such that for all $P\in {C([0,T];\Hilbert)} $,
$$\|\grad^\perp P_{\epsilon}\|_{L^\infty([0,T]\times[0,h];L^2(\Omega))} \leq C(\epsilon) \| P \|_{C([0,T];\Hilbert)}. $$
Then by \cref{compactness} with $\epsilon_n=\epsilon$ for all $n$, $S_\epsilon$ is a compact operator. By \eqref{thefirst}, the range of $S_\epsilon$ is bounded. Therefore, we can apply the Leray-Schauder fixed point theorem (see Evans \cite{evans}) to obtain a fixed point $\Psi_\epsilon$.
\end{proof}
\subsection{Passing to the Limit}
Consider the sequence of fixed points $\Psi_\epsilon$ to the operators $S_\epsilon$. By definition, $S_\epsilon(\Psi_\epsilon) = \Psi_\epsilon$, and therefore $\Psi_\epsilon$ solves the variational problem
\begin{align*}
B_{\Psi_\epsilon}(\Psi_\epsilon(t), v) &= \int_{\Omega\times[0,h]} \tilde{\nabla} \Psi_\epsilon(t) \cdot \nabla v \\
&= -\int_{\Omega\times[0,h]} -F_\epsilon(t) v + \int_{\Omega\times\{0,h\}} \lambda G_\epsilon (t) v + \int_0^h v|_{\partial\Omega\times[0,h]} j_0.\\
&= F_{\Psi_\epsilon}(v)
\end{align*}
Let us extract a subsequence which we index by $n \in\mathbb{N}$ such that $F_{\epsilon_n}$ converges weakly to $F$ in $L^\infty\left([0,T];L^2(\Omega\times[0,h])\right)$, and $G_{\epsilon_n}$ converges weakly to $G$ in $L^\infty([0,T];L^2(\Omega\times\{0,h\}) )$. Define $\Psi(t)$ as the time by time solution to 
\begin{align}
B(\Psi(t), v) &= \int_{\Omega\times[0,h]} \tilde{\nabla} \Psi(t) \cdot \nabla v \nonumber \\
&= -\int_{\Omega\times[0,h]} -F(t) v + \int_{\Omega\times\{0,h\}} \lambda G(t) v + \int_0^h v|_{\partial\Omega\times[0,h]} j_0. \nonumber\\
&= F(v) \nonumber
\end{align}

Recall that in Step 1, $\Psi_{\epsilon_n}:\Omega\times[0,h]\rightarrow\mathbb{R}$ was extended to $\Psi_{\epsilon_n, E}:\mathbb{R}^3\rightarrow\mathbb{R}$ and then mollified at length scale ${\epsilon_n}$ to produce a smooth velocity field $\Psi_{\epsilon_n,E}\ast \eta_{\epsilon_n}$. The following technical lemma regarding both the convergence of $\Psi_{\epsilon_n}$ and the mollified velocity fields $\Psi_{\epsilon_n,E}\ast \eta_{\epsilon_n}$ shall be useful. 

\begin{lemma}\label{stupidconvergence}
\begin{enumerate}
\item Up to a subsequence, $\Psi_{\epsilon_n}$ converges strongly to $\Psi$ in $C([0,T];\Hilbert)$
\item For any compact subdomain $\tilde{\Omega} \subset \Omega$, $\grad^\perp\Psi_{\epsilon_n,E}\ast \eta_{\epsilon_n}$ converges strongly up to a subsequence to $\grad^\perp\Psi$ in $C([0,T];L^2(\tilde{\Omega}\times[0,h]))$.
\end{enumerate}
\end{lemma}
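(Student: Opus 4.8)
The plan is to obtain part (1) from \cref{compactness} applied with $P_n=\Psi_{\epsilon_n}$ — observing that, since $\Psi_{\epsilon_n}$ is a fixed point, $S_{\epsilon_n}(\Psi_{\epsilon_n})=\Psi_{\epsilon_n}$, so the output $Q_n$ of \cref{compactness} is exactly $\Psi_{\epsilon_n}$ — and then to identify the resulting strong limit with the variationally defined $\Psi$; part (2) will follow from (1) together with the uniform bounds produced along the way. First I would record the bounds needed to invoke \cref{compactness}. The transported data $F_{\epsilon_n},G_{\epsilon_n}$ solve linear transport equations whose velocity $\grad^\perp(\Psi_{\epsilon_n,E}\ast\eta_{\epsilon_n})$ is divergence free and has vanishing vertical component, so its (smooth) flow preserves horizontal slices and is measure preserving; the method of characteristics then gives $\sup_{t\in[0,T]}\|F_{\epsilon_n}(t)\|_{L^2(\Omega\times[0,h])}\le \|f_{\epsilon_n}\|_{L^2}+\|a_{L,\epsilon_n}\|_{L^1([0,T];L^2)}$, uniformly in $n$, and likewise for $G_{\epsilon_n}$, while $j_0$ is fixed. \cref{elliptic} then bounds $\Psi_{\epsilon_n}$ uniformly in $C([0,T];\Hilbert)$, and \cref{superduperelliptic} bounds it uniformly in $L^\infty([0,T];H^{3/2}(\Omega\times[0,h]))$.

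I expect the main obstacle to be verifying the hypothesis of \cref{compactness}: that the mollified velocities $\grad^\perp(\Psi_{\epsilon_n,E}\ast\eta_{\epsilon_n})$ are bounded in $L^\infty([0,T]\times[0,h];L^2(\Omega))$ uniformly in $n$. Here I would use the standard anisotropic embedding $H^{3/2}(\Omega\times[0,h])\hookrightarrow C([0,h];H^1(\Omega))$ to bound the horizontal slices, $\sup_{z\in[0,h]}\|\grad^\perp\Psi_{\epsilon_n}(t,\cdot,\cdot,z)\|_{L^2(\Omega)}\le C\|\Psi_{\epsilon_n}(t)\|_{H^{3/2}(\Omega\times[0,h])}$, uniformly in $t,n$. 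Since the extension $\Psi_{\epsilon_n,E}$ equals $c_n(z)$ on $\Omega^{\mathsf{C}}\times[0,h]$ (so $\grad^\perp\Psi_{\epsilon_n,E}$ vanishes there) and is constant in $z$ outside $[0,h]$ with value the $z=0,h$ traces of $\Psi_{\epsilon_n}$ (also in $H^1(\Omega)$, uniformly), the quantity $\sup_{w\in\mathbb{R}}\|\grad^\perp\Psi_{\epsilon_n,E}(t,\cdot,\cdot,w)\|_{L^2(\mathbb{R}^2)}$ is bounded uniformly in $t,n$; Young's inequality applied slicewise, with Minkowski's inequality handling the $z$-integration and $\|\eta_{\epsilon_n}\|_{L^1}=1$, transfers this bound to the mollified field. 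It is precisely here that the half-derivative gain of \cref{superduperelliptic}, which puts the lateral trace of $\Psi_{\epsilon_n}$ into $H^1$, is essential; with only $\Hilbert$-regularity the mollified velocities need not be bounded as $\epsilon_n\to 0$.

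With the hypothesis verified, \cref{compactness} produces a subsequence along which $\Psi_{\epsilon_n}\to\Psi_\infty$ strongly in $C([0,T];\Hilbert)$. To identify $\Psi_\infty$ with $\Psi$, I would fix $v\in\Hilbert$ and $\phi\in C_c^\infty((0,T))$, integrate the fixed-point variational identity against $\phi(t)$ in time, and pass to the limit: the left side $\int_0^T B(\Psi_{\epsilon_n}(t),v)\phi(t)\,dt$ converges by the strong $C([0,T];\Hilbert)$ convergence, and the right side converges by the weak-$\ast$ convergences $F_{\epsilon_n}\rightharpoonup F$, $G_{\epsilon_n}\rightharpoonup G$ tested against $v\phi\in L^1([0,T];L^2)$, the term carrying $j_0$ being independent of $n$. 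Hence $\Psi_\infty(t)$ solves, for a.e.\ $t$ and every $v$ in a countable dense subset of $\Hilbert$ (hence every $v$), the variational problem defining $\Psi(t)$; uniqueness in Lax--Milgram gives $\Psi_\infty(t)=\Psi(t)$ for a.e.\ $t$, so, taking the continuous-in-time representative, $\Psi_{\epsilon_n}\to\Psi$ in $C([0,T];\Hilbert)$. This proves (1).

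For part (2), (1) gives $\grad^\perp\Psi_{\epsilon_n}\to\grad^\perp\Psi$ in $C([0,T];L^2(\Omega\times[0,h]))$, and the uniform $L^\infty([0,T];H^{3/2}(\Omega\times[0,h]))$ bound passes to $\Psi$, so $\sup_{t,z}\|\grad^\perp\Psi(t,\cdot,\cdot,z)\|_{L^2(\Omega)}<\infty$. Fix $\tilde{\Omega}$ compactly contained in $\Omega$, put $\delta=\dist(\tilde{\Omega},\partial\Omega)>0$, and take $n$ large enough that $2\epsilon_n<\delta$. I would split $[0,h]=[0,\epsilon_n]\cup[\epsilon_n,h-\epsilon_n]\cup[h-\epsilon_n,h]$. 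On the middle band $\grad^\perp(\Psi_{\epsilon_n,E}\ast\eta_{\epsilon_n})$ coincides with $(\grad^\perp\Psi_{\epsilon_n})\ast\eta_{\epsilon_n}$, because the mollification there uses only values of $\Psi_{\epsilon_n}$ inside $\Omega\times[0,h]$; estimating $\|(\grad^\perp\Psi_{\epsilon_n})\ast\eta_{\epsilon_n}-\grad^\perp\Psi\|_{L^2(\tilde{\Omega}\times[\epsilon_n,h-\epsilon_n])}$ by $\|\grad^\perp\Psi_{\epsilon_n}-\grad^\perp\Psi\|_{L^2(\Omega\times[0,h])}$ plus the mollification error of $\grad^\perp\Psi$, the first term tends to $0$ uniformly in $t$ by (1) and the second does too since $\{\grad^\perp\Psi(t):t\in[0,T]\}$ is relatively compact in $L^2$, on which mollification converges uniformly. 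On the two thin end-bands, the uniform-in-$n$ bound from the previous step on $\|\grad^\perp(\Psi_{\epsilon_n,E}\ast\eta_{\epsilon_n})(t,\cdot,\cdot,z)\|_{L^2(\Omega)}$ together with the bound on $\|\grad^\perp\Psi(t,\cdot,\cdot,z)\|_{L^2(\Omega)}$ gives an $L^2$-contribution of order $\sqrt{\epsilon_n}$, uniformly in $t$. Summing these contributions yields (2).
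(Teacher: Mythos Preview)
Your proposal is correct and for part (1) follows the paper's proof almost exactly: invoke \cref{compactness} with $P_n=Q_n=\Psi_{\epsilon_n}$, use \cref{superduperelliptic} together with the trace/anisotropic embedding $H^{3/2}(\Omega\times[0,h])\hookrightarrow C([0,h];H^1(\Omega))$ to obtain the uniform slice bound $\sup_{t,z}\|\grad^\perp\Psi_{\epsilon_n}(t,\cdot,z)\|_{L^2(\Omega)}$, and then transfer it to the mollified extended field by the structure of $\Psi_{\epsilon_n,E}$ and Young's inequality. Your explicit identification of the limit $\Psi_\infty$ with the variationally defined $\Psi$ via Lax--Milgram uniqueness is something the paper leaves implicit but is of course correct.

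For part (2) the overall decomposition into end-bands plus a middle band is the same, but there are two genuine differences in execution. First, you take end-bands of width $\epsilon_n$ and estimate their contribution by $\sqrt{\epsilon_n}$ via the slice bound, whereas the paper uses bands of a fixed auxiliary width $\delta$ and performs a two-step $\limsup_n$ then $\delta\to 0$ argument; your version is slightly cleaner. Second, and more substantively, on the middle band the paper estimates $\|\grad^\perp\Psi_{\epsilon_n}\ast\eta_{\epsilon_n}-\grad^\perp\Psi_{\epsilon_n}\|_{L^2}$ directly by extending $\Psi_{\epsilon_n}$ via a Sobolev extension operator and using the Fourier-side inequality $|\hat\eta(\epsilon_n\xi)-1|^2(1+|\xi|^2)^{-1/2}\to 0$ together with the uniform $H^{3/2}$ bound on $\Psi_{\epsilon_n}$. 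You instead insert $(\grad^\perp\Psi)\ast\eta_{\epsilon_n}$, handle one term by (1) plus Young, and handle the mollification error of the fixed function $\grad^\perp\Psi$ by compactness of the trajectory $\{\grad^\perp\Psi(t):t\in[0,T]\}$ in $L^2$. Your route is more elementary (no Fourier, no Sobolev extension) and leans only on the qualitative compactness already delivered by (1); the paper's route gives a quantitative mollification error in terms of the $H^{3/2}$ norm and does not use the strong convergence from (1) to treat the middle band. Both are valid.
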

\begin{proof}
To show (1), we consider \cref{compactness} with $P_n=\Psi_{\epsilon_n}=Q_n$. By \eqref{thesecond}
\begin{align*}
\sup_n \| \tilde{\nabla}\Psi_{\epsilon_n} &\|_{L^\infty([0,T];H^\frac{1}{2}(\Omega\times[0,h]))} \leq  C(\Omega,h,\lambda)\times \\
&\qquad \left( \| f_0 \|_{L^2} + \| g_0 \|_{L^2} + \| j_0 \|_{L^2} +\| a_L \|_{L^1\left([0,T];L^2\right)} + \| a_\nu \|_{L^1\left([0,T];L^2\right)} \right).
\end{align*}
Taking the trace then shows that 
\begin{align}
\sup_n \|\grad^\perp \Psi_{\epsilon_n}&\|_{L^\infty([0,T]\times[0,h];L^2(\Omega))} \leq  C(\Omega,h,\lambda)\times\nonumber \\
&\qquad \left( \| f_0 \|_{L^2} + \| g_0 \|_{L^2} + \| j_0 \|_{L^2} +\| a_L \|_{L^1\left([0,T];L^2\right)} + \| a_\nu \|_{L^1\left([0,T];L^2\right)} \right). \label{ialsoneedthis}
\end{align}
By construction of the extension $\Psi_{\epsilon_n,E}$, 
\[ \grad^\perp\Psi_{\epsilon_n,E}(x,y,z) = 
\begin{dcases}
       \grad^\perp\Psi_{\epsilon_n}(z) & (x,y,z)\in \tilde{\Omega}\times[0,h] \\
       \grad^\perp\Psi_{\epsilon_n}(0) & (x,y,z)\in \tilde{\Omega}\times (-\infty,0]\\          \grad^\perp\Psi_{\epsilon_n}(h) & (x,y,z)\in \tilde{\Omega}\times [h,\infty), 
       \end{dcases}    
\]
showing that $\grad^\perp\Psi_{\epsilon_n, E}$ is uniformly bounded in $n$ in $L^\infty([0,T]\times[-\epsilon_n,h+\epsilon_n];L^2(\Omega))$.
Therefore, 
\begin{align}
\sup_{n} \|\grad^\perp\Psi_{\epsilon_n,E}(t)\ast\eta_{\epsilon_n}\|_{L^\infty([0,T]\times[0,h];L^2(\Omega))}< \infty \label{ineedthis}
\end{align}
Thus the assumptions of \cref{compactness} are satisfied, and up to a subsequence, $\Psi_{\epsilon_n}$ converges to $\Psi$ strongly in $C([0,T];\Hilbert)$.

Moving to (2), let $\tilde{\Omega}$ be a fixed compact subdomain of $\Omega$. We have that for $t\in[0,T]$,
\begin{align}
\limsup_{n\rightarrow\infty}\|&\grad^\perp(\Psi_{\epsilon_n,E}\ast\eta_{\epsilon_n}(t)-\Psi(t))\|_{L^2(\tilde{\Omega}\times[0,h])} \leq \limsup_{n\rightarrow\infty}\|\grad^\perp(\Psi_{\epsilon_n,E}\ast\eta_{\epsilon_n}(t)-\Psi_{\epsilon_n}(t))\|_{L^2(\tilde{\Omega}\times[0,h])}\nonumber\\
&\qquad\qquad\qquad\qquad\qquad + \limsup_{n\rightarrow\infty}\|\grad^\perp(\Psi_{\epsilon_n}(t)-\Psi(t))\|_{L^2(\tilde{\Omega}\times[0,h])}  \nonumber\\
&\leq \sup_{n} \|\grad^\perp\Psi_{\epsilon_n,E}\ast{\eta_{\epsilon_n}}(t)\|_{L^2(\tilde{\Omega}\times([0,\delta)\cup(h-\delta,h]))}+\sup_{n} \|\grad^\perp\Psi_{\epsilon_n}(t)\|_{L^2(\tilde{\Omega}\times([0,\delta)\cup(h-\delta,h]))}   \nonumber\\
&\qquad+\limsup_{n\rightarrow\infty}\|\grad^\perp(\Psi_{\epsilon_n,E}\ast\eta_{\epsilon_n}(t)-\Psi_{\epsilon_n}(t)) \|_{L^2(\tilde{\Omega}\times[\delta,h-\delta])}. \nonumber
\end{align}
By \eqref{ialsoneedthis} and \eqref{ineedthis}, the first two terms go to zero as $\delta\rightarrow 0$. So it suffices to show that for fixed $\delta$ that
$$ \limsup_{n\rightarrow\infty}\|\grad^\perp(\Psi_{\epsilon_n,E}\ast\eta_{\epsilon_n}(t)-\Psi_{\epsilon_n}(t))\|_{L^2(\tilde{\Omega}\times[\delta,h-\delta])} =0. $$
For $n$ large enough, $$\Psi_{\epsilon_n,E}\ast\eta_{\epsilon_n} = \Psi_{\epsilon_n}\ast\eta_{\epsilon_n} \qquad \forall (x,y,z)\in(\tilde{\Omega}\times[\delta,h-\delta]).$$
By extending $\Psi_{\epsilon_n}$ from $\tilde{\Omega}\times[\delta,h-\delta]$ to $\mathbb{R}^3$ using a standard Sobolev extension operator, it suffices to prove the claim for functions defined on all of $\mathbb{R}^3$.  Using the Fourier characterization of $H^\frac{3}{2}(\mathbb{R}^3)$, we can write 
\begin{align*}
\| \grad^\perp \Psi_{\epsilon_n}\ast\eta_{\epsilon_n}(t) - \grad^\perp\Psi_{\epsilon_n}(t)\|_{L^2(\mathbb{R}^3)}^2 &\leq \int_{\mathbb{R}^3} |\xi|^2|\hat{\Psi}_{\epsilon_n}(t,\xi)|^2 |\hat{\eta}_{\epsilon_n}(\xi)-1|^2 \,d\xi\\
&\leq \int_{\mathbb{R}^3} |\xi|^2 (1+|\xi|^2)^\frac{1}{2}|\hat{\Psi}_{\epsilon_n}(t,\xi)|^2 \frac{|\hat{\eta}(\epsilon_n\xi)-1|^2}{(1+|\xi|^2)^\frac{1}{2}} \,d\xi\\
&\leq \sup_{n} \|\Psi_{\epsilon_n}(t)\|_{H^\frac{3}{2}(\mathbb{R}^3)}^2 \cdot \sup_{\xi} \frac{|\hat{\eta}(\epsilon_n\xi)-1|^2}{(1+|\xi|^2)^\frac{1}{2}}.
\end{align*}
which goes to zero uniformly in $t$ as $n\rightarrow\infty$ since $\hat{\eta}$ is smooth and $\hat{\eta}(0)=1$, concluding the proof.
\end{proof}

We now pass to the limit to show that $\Psi$ is the solution we seek. As first utilized in \cite{pv} and then again in \cite{novackweak}, the strong convergence at the level of $\nabla\Psi_{\epsilon_n}$ and the reformulation of the system in terms of $\nabla\Psi_{\epsilon_n}$ give compactness in the nonlinear term of the reformulation. Fix a test function $\phi$ as in \cref{weaktransport}. Then
\begin{align}
-\int_0^T \int_{\tilde{\Omega}\times[0,h]} \left(\left( \partial_t \phi + \grad^\perp (\Psi_{\epsilon_n}\ast\eta_{\epsilon_n}) \cdot \grad \phi  \right) F_{\epsilon_n} + \phi a_{L,n}  \right) \,dx\,dy\,dz\,dt  = \int_{\tilde{\Omega}\times[0,h]} \phi|_{t=0}f_{\epsilon_n} \,dx\,dy\,dz \label{qg1}
\end{align}
and
\begin{align}
\int_0^T \int_{\tilde{\Omega}\times\{0,h\}} \left(\left( \partial_t \phi + \grad^\perp (\Psi_{\epsilon_n}\ast\eta_{\epsilon_n}) \cdot \grad \phi  \right) G_{\epsilon_n} + \phi a_{\nu,n}  \right) \,dx\,dy\,dt = -\int_{\tilde{\Omega}\times\{0,h\}} \phi|_{t=0}g_{\epsilon_n} \,dx\,dy \label{qg2}
\end{align}
For each time $t>0$, let $A_{\epsilon_n}(t)\in\Hilbert$ be the solution to 
\begin{align*}
B_{A,n}(A_{\epsilon_n}(t), v) &:= \int_{\Omega\times[0,h]} \tilde{\nabla} A_{\epsilon_n}(t) \cdot \nabla v \\
&= -\int_{\Omega\times[0,h]} -a_{L,\epsilon}(t) v + \int_{\Omega\times\{0,h\}} a_{\nu,\epsilon}(t)v \\
&= F_{A,n}(v)
\end{align*}
Using $\partial_t \phi + \grad^\perp(\Psi_{\epsilon_n}\ast\eta_{\epsilon_n})\cdot\grad\phi$ and $\phi$ as test functions in the variational formulations for $\Psi_{\epsilon_n}$ and $A_{\epsilon_n}$, respectively, turns \eqref{qg1} and \eqref{qg2} into
\begin{align}
-\int_0^T \int_{\tilde{\Omega}\times[0,h]} &\left(\left( \partial_t \nabla \phi + \grad^\perp(\Psi_{\epsilon_n}\ast\eta_{\epsilon_n}):\grad\nabla\phi \right) \cdot \tilde{\nabla} \Psi_{\epsilon_n}  + \nabla \phi \cdot \tilde{\nabla} A_{\epsilon_n} \right ) \,dx\,dy\,dz\,dt \nonumber \\
&= \int_{\tilde{\Omega}\times[0,h]} \nabla\phi|_{t=0}\cdot \tilde{\nabla}\Psi_{\epsilon_n}|_{t=0} \,dx\,dy\,dz \label{rqg}
\end{align}
Applying \cref{stupidconvergence}, we pass to the limit to obtain
\begin{align*}
-\int_0^T \int_{\tilde{\Omega}\times[0,h]} &\left(\left( \partial_t \nabla \phi + \grad^\perp\Psi:\grad\nabla\phi \right) \cdot \tilde{\nabla} \Psi  + \nabla \phi \cdot \tilde{\nabla} A \right ) \,dx\,dy\,dz\,dt \\
&= \int_{\tilde{\Omega}\times[0,h]} \nabla\phi|_{t=0}\cdot \tilde{\nabla}\Psi|_{t=0} \,dx\,dy\,dz
\end{align*}
Rearranging the variational formulation now for $\Psi$ gives
\begin{align*}
-\int_0^T \int_{\tilde{\Omega}\times[0,h]} \left(\left( \partial_t \phi + \grad^\perp \Psi \cdot \grad \phi  \right) F + \phi a_L  \right) \,dx\,dy\,dz\,dt = \int_{\tilde{\Omega}\times[0,h]} \phi|_{t=0}f \,dx\,dy\,dz 
\end{align*}
and
\begin{align*}
\int_0^T \int_{\tilde{\Omega}\times\{0,h\}} \left(\left( \partial_t \phi + \grad^\perp \Psi \cdot \grad \phi  \right) G + \phi a_\nu  \right) \,dx\,dy\,dt = -\int_{\tilde{\Omega}\times\{0,h\}} \phi|_{t=0}g\,dx\,dy . 
\end{align*}

The final part of the proof consists of showing that $\Psi(t)$ solves a variational problem for all $t\in[0,T]$ which verifies the compatibility condition \cref{compatibility}. By construction of the approximate solution operators, $\Psi_{\epsilon_n}(t)$ solves the variational problem with data 
$$ \left( F_{\epsilon_n}(t)|_{\Omega\times[0,h]} , G_{\epsilon_n}(t)|_{\Omega\times\{0,h\}}, j_0 \right). $$
In addition, $F_{\epsilon_n}(t)$ and $G_{\epsilon_n}(t)$ are supported in a neighborhood of order ${\epsilon_n}$ around $\Omega$ for all time $t\in[0,T]$. Then using the weak convergence of $F_n|_{\Omega\times[0,h]}$ to $F$ and $G_n|_{\Omega\times\{0,h\}}$ to $G$, we have
\begin{align*} 
\int_{\Omega\times[0,h]} F_n(t) + &\int_{\Omega\times\{0,h\}} G_n(t) + \int_0^h j_0 \rightarrow \\
&\int_{\Omega\times[0,h]} f_0 + \int_0^t \int_{\Omega\times[0,h]} a_L +\int_{\Omega\times\{0,h\}} g_0 +  \int_0^t \int_{\Omega\times\{0,h\}} a_\nu  + \int_0^h j_0
\end{align*}
Using the assumption that $(f_0,g_0,j_0)$ and $(a_L, a_\nu)$ satisfy \cref{compatibility} shows that $\Psi(t)$ solves an elliptic problem with compatible data.  Then by \cref{elliptic}, $\mathcal{L}(\Psi)=F$ and $\dnu=G$ in the traditional weak sense.

We have thus shown that $\Psi$ satisfies part (4) of \cref{main}, and therefore \cref{weaktransport} and part (1) of \cref{main}. For part (2), the choice of $\Psi$ as a weak limit of functions belonging to $L^\infty\left([0,T];\Hilbert\right)$ implies that $\Psi(t)\in\Hilbert$ for almost every $t$.  Therefore, $\Psi$ must depend only on $z$ on the lateral boundary, and there exists $c(t,z)$ such that $\Psi|_{\partial\Omega\times[0,h]}=c(t,z)$ for almost every time.  To show part (3), first note that in light of the $H^\frac{1}{2}\left(\Omega\times[0,h]\right)$ bound on $\nabla\Psi$, $\grad\Psi\cdot\nu_s(t)$ is well-defined in $L^2(\partial\Omega\times[0,h])$ for almost every time. Assuming now that $j_0\in H^\frac{1}{2}(0,h)$, let $\alpha_n(z)$ be a compactly supported smooth function in $(\frac{1}{n},h-\frac{1}{n})$ such that $\alpha_n(z)=1$ for all $z\in(\frac{2}{n},h-\frac{2}{n})$. Applying \cref{superduperelliptic} to $\alpha_n \Psi(t)$ shows that $\alpha_n \Psi(t) \in H^2(\Omega\times[0,h])$, and therefore $\lap\Psi(t,z)\in L^2(\Omega)$ for $z\in(\frac{2}{n},h-\frac{2}{n})$. Then
$$ \int_{\Omega\times\{z\}} \lap \Psi = \int_{\partial\Omega\times\{z\}} \grad\Psi\cdot\nu_s $$
and applying \cref{elliptic} shows part (3). Finally, the bounds in part (5) follow from the divergence free property of the flow and \cref{superduperelliptic}, completing the proof of the theorem.

\bibliography{references}
\bibliographystyle{plain}
\nocite{cv}
\nocite{triebelbook}
\nocite{triebelpaper}
\nocite{berghlofstrom}
\nocite{evans}
\nocite{2016arXiv161000676B}
\nocite{cvicol}
\nocite{knv}
\nocite{pv}
\nocite{cmt}
\nocite{Marchand}
\nocite{Resnick}
\nocite{dg}
\nocite{bb}
\nocite{novackvasseur}
\nocite{a}
\nocite{novackweak}
\nocite{ci}
\nocite{ci2}
\nocite{cn}
\nocite{cn2}
\end{document}